\newtheorem{theorem}{Theorem}
\newtheorem{lemma}{Lemma}
\newtheorem{proposition}{Proposition}
\newtheorem{remark}[theorem]{Remark}
\newcommand{\dist}{\mathop{\mathrm{dist}}\nolimits}
\newcommand{\length}{{\mathrm{length}}}
\newcommand{\Prob}{{\mathbb{P}}}
\newcommand{\eps}{{\varepsilon}}
\newcommand{\bT}{{\mathbf{T}}}
\newcommand{\boundellipse}[3]% center, xdim, ydim
{(#1) ellipse (#2 and #3)
}
\title{The first encounter of two billiard particles of small radius}
\author{Dmitry Dolgopyat, P\'{e}ter N\'{a}ndori}
\begin{document}

\maketitle

\begin{abstract}
We prove that the time of the first collision between two particles in a Sinai billiard table converges weakly to 
an exponential distribution when time is rescaled by the inverse of the radius of the particles.
This results provides a first step 
in studying the energy evolution of hard ball systems in the rare interaction limit.
\end{abstract}

\section{Result}

Understanding derivation of macroscopic laws from deterministic microscopic dynamics is an outstanding 
problem in mathematical physics. So far this has been achieved in a very limited number of cases. One 
prominent example, where this program has been implemented is the ideal gas of particles moving in dispersive
domain (see \cite{BLRB00, Sz00} and references therein). Unfortunately, the results for the ideal gas do not 
conform to the predictions of statistical mechanics. The reason is that in the ideal gas there is no mechanisms
of coming to equilibrium, due to the lack of interaction between the particles. One way to rectify this situation is to
study a rare interaction limit of a many particle system. One step in this direction is to understand how often
the noninteracting particles are coming close to each other. This is the question studied in this paper.

Let
$\mathcal D = \mathbb T^2 \setminus \cup_{j=1}^J B_j$, where $B_j$ are disjoint
strictly convex sets with $\mathcal C^3$ smooth boundaries.
The phase space of one billiard particle is 
$\Omega = \mathcal D \times \mathcal S^1$. 
The billiard
flow consists of free flight among the scatterers and specular reflection off their
boundaries and is denoted by $\Phi^t: \Omega  \rightarrow \Omega $,
 $t \in \mathbb R$. The phase space of the billiard ball map is  
 $
 \mathcal M = \{ (q, v) \in \Omega: q \in \partial \mathcal D, \langle n,v  \rangle \geq 0\},
 $
 where $v$ is the normal vector of $\partial \mathcal D$ at $q$ pointing into $\mathcal D$. 
The billiard ball map $\mathcal F: \mathcal M \rightarrow \mathcal M$ takes the particle from one collision
to the next one. The flow time between two collisions is bounded from below by $\tau_{\min}>0$
and we also assume that it is bounded from above by $\tau_{\max}< \infty$ (this is called the finite horizon
condition).
The invariant measure of the billiard flow is $\mu = c_{\mu} dq dv$ and that of the billiard map is
$\nu = c_{\nu} \cos(\varphi, n) d\varphi dr$, where $c_{\mu} = \frac{1}{2 \pi |\mathcal D|}$ and
$c_{\nu} = \frac{1}{2 |\partial \mathcal D|}$. We will denote by $\Pi_q$ the projection from $\Omega$ to
$\mathcal D$ and similarly by $\Pi_{\varphi}$ the projection from $\Omega$ to $\mathcal S^1$.

Let us consider two billiard particles on the same domain,
i.e. $(q_i, \varphi_i ) \in \Omega $ for $i=1,2$. 
Without loss of generality, we assume that the first particle travels with speed one and the second one
travels with speed $\lambda \in [0,1]$ (note that by rescaling time we can fix the speed of the faster one).
Our main objective is to estimate the time we need to wait until
the two particles get $\varepsilon$ close. 
Thus we introduce the notation
$$ 
\mathcal A_{\lambda, t,\varepsilon} = 
\{ (q_1,\varphi_1), (q_2, \varphi_2): \exists s \in [0, t]: 
\| \Pi_q \Phi^s (q_1, \varphi_1) - \Pi_q 
\Phi^{\lambda s} (q_2, \varphi_2) \|  \leq \varepsilon
\}.
$$
Next, we define the rate function $\rho$ by
\begin{equation}
\label{eq:rho1}
 \rho(\lambda) =  \frac{1}{2 \pi |\mathcal D|}\int_{0}^{2 \pi} \sqrt{1 - 2 \lambda \cos \varphi + 
\lambda^2} d\varphi.
\end{equation}
Note that $\rho$ is bounded away from zero.
Our main theorem is the following

\begin{theorem}
\label{thm1}
For $Leb_1$-a.e. $\lambda \in [0,1]$ and for every fixed $T$,
$$ \lim_{\varepsilon \rightarrow 0} (\mu \times \mu) (\mathcal A_{\lambda,T/\varepsilon, \varepsilon}) = 1- e^{-\rho(\lambda) T}.
$$
% $$ \lim_{\varepsilon \rightarrow 0} \mu \times \mu 
%\{ (q_1,\varphi_1), (q_2, \varphi_2): \forall t< \frac{T}{\varepsilon}: 
%\| \Pi_q \Phi_t (q_1, \varphi_1) - \Pi_q 
%\Phi_{\lambda t} (q_2, \varphi_2) \|  \geq \varepsilon
%\}= e^{-\rho(\lambda) T}$$
\end{theorem}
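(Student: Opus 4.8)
The overall strategy is to show that the collision event is governed by a Poisson-type limit theorem for the visits of a rescaled dynamical system to a shrinking target. Consider the product flow $\Psi^t = \Phi^t \times \Phi^{\lambda t}$ on $\Omega \times \Omega$, which is a flow on a five-dimensional manifold preserving $\mu \times \mu$. The set $\mathcal A_{\lambda,t,\eps}$ is precisely the set of initial conditions whose $\Psi$-orbit enters the ``collision strip'' $\Delta_\eps = \{((q_1,\varphi_1),(q_2,\varphi_2)): \|q_1 - q_2\| \le \eps\}$ at some time $s \in [0,t]$. Since $(\mu\times\mu)(\Delta_\eps) \asymp \eps$, the natural timescale on which one expects an order-one number of entrances is $t \sim 1/\eps$, which matches the rescaling $T/\eps$ in the statement. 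The plan is therefore: (i) establish mixing (in fact, exponential or at least sufficiently rapid mixing) for the product flow $\Psi^t$ for a.e.\ $\lambda$; (ii) use this mixing together with a standard ``no short returns'' estimate to prove that the number of well-separated entrance times into $\Delta_\eps$ in $[0,T/\eps]$ converges to a Poisson random variable; (iii) identify the parameter of that Poisson law as $\rho(\lambda)T$ by computing the asymptotic entrance rate.

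For step (iii), the rate is computed by a Kac-type / co-area argument: the expected number of entrances per unit time into $\Delta_\eps$ equals (to leading order) the flux of the flow $\Psi^t$ across the boundary $\partial \Delta_\eps = \{\|q_1-q_2\|=\eps\}$, divided by $\eps$ as $\eps\to 0$, which is an integral over the set $\{q_1 = q_2 =: q\}$ of the relative normal speed $|v_1 - \lambda v_2|$ against the invariant measure. Writing $v_i = (\cos\theta_i,\sin\theta_i)$ and using that $\mu$ has density $\frac{1}{2\pi|\mathcal D|}$ in $(q,\theta)$, integrating out $q$ gives $|\mathcal D|$, one angle gives $2\pi$, and the remaining integral over the relative angle $\varphi = \theta_1 - \theta_2$ of $\sqrt{1 - 2\lambda\cos\varphi + \lambda^2} = |v_1 - \lambda v_2|$ reproduces exactly formula \eqref{eq:rho1} for $\rho(\lambda)$. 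So the rate identification is essentially a bookkeeping computation once the Poisson limit is in place.

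The main obstacle is step (i): the product flow $\Psi^t = \Phi^t \times \Phi^{\lambda t}$ is \emph{not} mixing for all $\lambda$ — e.g.\ $\lambda = 1$ gives the diagonal-type degeneracy, and more generally resonances between the two copies can spoil mixing — which is exactly why the theorem is stated only for $Leb_1$-a.e.\ $\lambda$. To handle this I would invoke the spectral theory of the billiard flow: the single billiard flow $\Phi^t$ has a Lebesgue spectrum on the orthocomplement of constants, with good quantitative mixing (Chernov, Young, and the works on exponential decay of correlations for finite-horizon dispersing billiards). For the product, one needs that the flows $\Phi^t$ and $\Phi^{\lambda t}$ have no common eigenvalue-like resonances; since the billiard flow is mixing with absolutely continuous spectral measure, a Fubini/Borel–Cantelli argument over $\lambda$ shows that for a.e.\ $\lambda$ the relevant correlation integrals $\int \langle f\circ\Phi^s \otimes g\circ\Phi^{\lambda s}, h\rangle\, ds$ decay, giving mixing of $\Psi$ and enough quantitative control (via the finite-horizon condition, which makes return times bounded) to run the Poisson approximation. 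The secondary technical difficulty is the ``no short returns'' / anticlustering estimate: one must rule out the orbit re-entering $\Delta_\eps$ immediately after leaving, which requires understanding the local geometry of the collision strip under the billiard dynamics — essentially that once the two particles are $\eps$-close they separate at speed bounded below by $\rho$-type quantities except on a negligible set of configurations, and that genuine recollisions within a short window have probability $o(\eps)$. Combining the Poisson convergence of the count $N_\eps([0,T/\eps])$ with $\mathbb P(N_\eps = 0) \to e^{-\rho(\lambda)T}$ then yields $(\mu\times\mu)(\mathcal A_{\lambda,T/\eps,\eps}) = \mathbb P(N_\eps \ge 1) \to 1 - e^{-\rho(\lambda)T}$, as claimed.
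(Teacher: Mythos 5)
Your overall architecture is the right one — compute a rate, establish an anticlustering/no-short-returns estimate, use mixing to decorrelate well-separated time windows, conclude a Poisson/exponential limit — and the rate computation (co-area / relative-speed integral giving $\rho(\lambda)$ from formula \eqref{eq:rho1}) is exactly what the paper does in Lemma~\ref{lem1}. But there is a genuine gap at the center of your plan: your step (i) asks for quantitative mixing of the product flow $\Psi^t = \Phi^t\times\Phi^{\lambda t}$, for a.e.\ $\lambda$, and your proposed route to it (``absolutely continuous spectral measure plus a Fubini/Borel--Cantelli argument over $\lambda$'') is not a proof. Even granting that qualitative mixing of the product held for a.e.\ $\lambda$, the Poisson approximation needs \emph{quantitative} decay of correlations for $\Psi^t$ against initial measures supported near the thin set $\Delta_\eps$, and nothing like that is available off the shelf; the product of a hyperbolic flow with a reparametrized copy of itself is a notoriously delicate object.

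The paper never touches mixing of the product. Instead it \emph{fixes} the first particle's trajectory $(q_1,\varphi_1)$ and treats the whole problem as a shrinking-target problem for the \emph{second} particle alone: the target at time $t$ is $\mathcal A^\xi_{\lambda,\delta,\eps}(\Phi^t(q_1,\varphi_1))$, a moving $\eps$-tube determined by the (now deterministic) orbit of particle one. Then the only dynamical input needed is quantitative decay of correlations for a \emph{single} Sinai billiard flow started from standard pairs (Theorem~\ref{thm:DEC}), which is available. The conditioning on ``no past collision'' is handled by the Markov decomposition into standard pairs and the growth lemma (Lemma~\ref{lem:gr}, Lemma~\ref{lemma:Gn'}), so one never needs to propagate a measure on the five-dimensional product phase space at all. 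You should also note that you have misattributed the a.e.-$\lambda$ restriction: it does \emph{not} come from resonances in the product flow. In the paper it comes entirely from the fast-recollision estimate (Lemma~\ref{lem3}): if two good collisions occur within time $\log^{100}\eps$ of each other, the two configuration trajectories must intersect transversally near the second encounter, and requiring the second particle to arrive at that intersection within $O(\eps/\sin\xi)$ of the right moment pins $\lambda$ to an interval of length $O_\xi(\eps)$; summing over the at most $\log^{200}\eps$ candidate intersections and applying Markov plus Borel--Cantelli \emph{in $\lambda$} rules out fast recollisions for $Leb_1$-a.e.\ $\lambda$. That is a geometric argument about the two trajectories, not a spectral argument about $\Psi^t$. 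Until you replace your step (i) with something like the paper's fix-the-first-particle conditioning (or an actual proof of quantitative product mixing, which would be a substantial new result), the proposal does not close.
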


\begin{remark}
Theorem \ref{thm1} was conjectured in an unpublished paper by Thomas Gilbert \cite{G15}. 
In particular, he computed 
the function $\rho$ based on the ergodicity and explicit formulas 
for the mean free path (a computation along the lines of \cite{B15}).
%\todo[inline]{Was not this technique introduced by Boltzmann? I do not know. The text is reworded.
%Do you want to cite Boltzmann?}
%Namely, he derived the formula
%\[
%\rho(\lambda) = \frac{2}{\pi^2}
%(2 E(\lambda^2) - (1-\lambda^2)K(\lambda^2))
%\lim_{\varepsilon \rightarrow 0} \frac{1}{\varepsilon} \frac{|\partial Q_{\varepsilon,BB}|}{|Q_{\varepsilon}|} 
%\]
%where $K$ and $E$ are the complete Jacobi elliptic integrals of first and second kind;
%$Q_{\varepsilon} = \{ q_1, q_2 \in \mathcal D^2: \| q_1 - q_2 \| > \varepsilon \}$ 
%is the configuration space of two hard balls on $\mathcal D$ and 
%$\partial Q_{\varepsilon,BB} = \{ q_1, q_2 \in \mathcal D^2: \| q_1 - q_2 \| = \varepsilon \}$ 
%is their binary collision surface. Thus his formula is equivalent to 
%\begin{equation}
%\label{eq:rho2}
%\rho(\lambda) = \frac{8 E(\lambda^2) - 4(1-\lambda^2)K(\lambda^2)}{\pi |\mathcal D|}.
%\end{equation}
%Theorem \ref{thm1} and the computation of Gilbert imply that the right hand sides of
%(\ref{eq:rho1}) and (\ref{eq:rho2}) are equal. 
%It should be possible to find a direct proof of this fact, without using
%billiards. 
\end{remark}

%%%%%%%%%%%%%%%%%%%%%%%%%%%%%%%%%%%%%%%%%%%%%%

\section{Preliminaries}
\label{sec:pre}
%\subsection{Unstable curves and standard pairs}

We introduce the basic definitions and lemmas that we will need to prove Theorem \ref{thm1}.
Fix some constant ${\mathfrak s} < \tau_{\min}$ (e.g. ${\mathfrak s} = \tau_{\min}/2$). 
In this section, we study $\Phi^{{\mathfrak s}}$, the time ${\mathfrak s}$-map of the billiard flow (that is, we only
study one particle).
The forthcoming definitions and statements concerning the time ${\mathfrak s}$-map are very similar 
to the corresponding definitions and statements for the billiard map $\mathcal F$.
As the proofs are also very similar, we do not give detailed proofs here, instead we highlight the differences
and strongly encourage the reader to consult the detailed exposition of \cite{CM06}. 
%Several of our statements concerning the time ${\mathfrak s}$-map are proved along the lines
%of analogous statements for the map $\mathcal F$.

%First, we fix some constant ${\mathfrak s} < \tau_{\min}$ (e.g. ${\mathfrak s} = \tau_{\min}/2$). 
%In the sequel, we will study $\Phi_{{\mathfrak s}}$, the time ${\mathfrak s}$-map of the billiard flow.
The phase space of $\Phi^{{\mathfrak s}}$ is $\Omega$.
Let us consider the Jacobi coordinates in the tangent space $\mathcal T_X \Omega$ at the point
$X = (x_1, x_2, \varphi)$:
$$ 
d \eta = \cos \varphi dx_1 + \sin \varphi dx_2, \quad 
d \xi = -\sin \varphi dx_1 + \cos \varphi dx_2, 
\quad d \omega = d \varphi.
$$
For $X = (q, \varphi) \in \Omega$ denote by $\mathcal T^{\perp}_X $ the subspace spanned by $(d \xi, d \omega)$.
Then for a.e. $X$ there exists
a $1$ dimensional unstable and a $1$ dimensional stable manifold $ W^u (X), W^s (X)$,
both of which satisfy $\mathcal T_X W^{u/s} \in  \mathcal T^{\perp}_X \Omega $.
These coincide with the stable and unstable manifolds of the flow, see Section 6.8 in \cite{CM06}.

Next, we want to introduce an extension of the class of the unstable manifolds, namely the unstable curves.
First we need to recall some notations from \cite{CM06}. The first collision in negative time is $t^-(X)$, that is
\[
t^-(X) = \max \{ t<0: \Phi^t(X) \in \mathcal M\}.
\]
Next we define the projection ${\bm P^-} (X)= \Phi^{t^-(X)} (X)$ from $\Omega$ to $\mathcal M$.
Then the linear map 
\[
D {\bm P^-}(X): \mathcal T^{\perp}_X \Omega \rightarrow \mathcal T_{t^{-}(X)} \mathcal M
\]
is a bijection. The unstable conefield of the map $\mathcal F$ is constructed in Section 4.4 of \cite{CM06}:
for any $x \in \mathcal M$, 
\[
\hat{\mathcal C}_x = \{ (dr, d\varphi) \in \mathcal T_x \mathcal M: \mathcal K \leq d\varphi / dr \leq \mathcal K
+ \cos \varphi / t^-(x)\},
\]
where $\mathcal K$ is the curvature of $\partial \mathcal D$ at the configurational component of $x$.
Now we extend this conefield to $\mathcal T \Omega$ by
%With the notation of Sections 3.3 and 4.4 in \cite{CM06}, let us write 
\[
 \hat{\mathcal  C}_X = 
(D {\bm P^-}(X))^{-1} \hat{ \mathcal C}_{{\bm P}^-(X)} \subset
\mathcal T^{\perp}_X \Omega.
\]
%\todo[inline]{What are ${\bm P}$ and $\hat{\mathcal C}$?}
%that is we extend the unstable conefield of the map $\mathcal F$ to 
%$\mathcal T \Omega$
%(see (3.18) and (4.13) in \cite{CM06}). 
Clearly, $\hat{\mathcal  C}_X$ is invariant under the flow in the usual sense.
First, we call a curve $W \subset \Omega$ {\it contact unstable curve} if at every point $X\in W$,
the tangent line $\mathcal T_X W$ belongs to the cone $\hat{\mathcal  C}_X$. Although the contact 
unstable curves would suffice for the sake of the present work, we prefer to slightly generalize 
the concept. Thus we introduce
\[
 {\mathcal  C}_X = \{ (d \eta, d \xi, d \omega) \in \mathcal T_X \Omega: 
(d \xi, d \omega) \in \hat{\mathcal  C}_X, d \eta / d \xi < C_f
\},
\]
where $C_f$ is a fixed universal constant ($f$ stands for flow). Also note that for tangent vectors
in ${\mathcal  C}_X$, $d \eta / d \omega < C'_f$ 
%\todo[inline]{What is $\omega$?}
holds with some universal constant $C'_f$.
We
say that 
$W \subset \Omega$ is an {\it unstable curve} if at every point $X\in W$, 
$\mathcal T_X W \in \mathcal C_X$.
The image of an unstable
curve is unstable. Furthermore, unstable curves are stretched by the map $\Phi^{\mathfrak s}$ 
in the sense that
\begin{equation}
\label{eq:stretch}
 \mathcal J_W (\Phi^{\mathfrak s} )^n (X) :=
 \frac{\| D_X \left( \Phi^{\mathfrak s}\right)^n (dX) \|}{\| dX \|} \geq C \Lambda ^n
\end{equation}
with some $\Lambda= \Lambda(C_f)>1$ 
uniformly in $X$ and $dX \in \mathcal C_X$. Here, $\mathcal J$ stands for Jacobian
and $\Lambda$ is a constant that only depends on $\mathcal D$ and $C_f$
(it also depends on $\mathfrak{s}$ but we ignore this dependence here since
we have chosen $\mathfrak{s}=\tau_{min}/2$).
%Note however that we do not have one-step expansion as $\tau(X)$ may be bigger than ${\mathfrak s}$.

The expansion is bounded from below but it is unbounded from above near grazing collisions.
So as to recover distortion bounds and following the common approach introduced by Bunimovich,
Chernov and Sinai (\cite{BSCh90}),
we decompose the phase space $\Omega$ into homogeneity domains ${\bm G}, {\bm H}_k$,
with $k=0$ or $|k| \geq k_0$. Namely, 
for such $k$'s, we define
\[
 {\bm H}_{k} = \{ X \in \Omega: t^-(X) < {\mathfrak s} \text{ and } {\bm P}^- (X) \in \mathbb H_k\}
\]
%\todo[inline]{What is $t^-(X)$?}
where 
\[
\mathbb H_{k} =
\begin{cases}
 \{ (r, \varphi): - \pi /2 + k_0^2 < \varphi < + \pi /2 - k_0^2 \} &\text{ for $k=0$}\\
\{ (r, \varphi): \pi /2 - k^2 < \varphi < + \pi /2 - (k+1)^2 \} &\text{ for $k\geq k_0$}\\
\{ (r, \varphi): - \pi /2 + (k+1)^2 < \varphi < - \pi /2 + k^2 \} &\text{ for $k\leq -k_0$}\\
\end{cases}
\]
are the usual homogeneity strips of the map $\mathcal F$. Finally, we define 
${\bm G} = \{ X \in \Omega:  t^-(X) > {\mathfrak s} \}$. We say that an unstable curve $W$
is {\it weakly homogeneous}, if it belongs to one homogeneity domain. 
%and an unstable manifold $W$ is an H-manifold if $\Phi^{-n {\mathfrak s}}$ is weakly homogeneous for all $n \geq 0$
Then we have the following distortion bound (cf. Lemma 5.27 in \cite{CM06}): 
if $(\Phi^{\mathfrak s})^{- n} W$ is weakly homogeneous
for every $0 \leq n \leq N-1$, then for all $1 \leq n \leq N$ and for all $Y, Z \in W$,
\begin{equation}
\label{eq:distorsion}
e^{- C_d \frac{|W(Y,Z)|}{ |W|^{2/3}}} \leq \frac{ \mathcal J_W (\Phi^{\mathfrak s} )^{-n} (Y)  }{\mathcal J_W (\Phi^{\mathfrak s} )^{-n} (Z) } 
\leq e^{ C_d \frac{|W(Y,Z)|}{ |W|^{2/3}}}
\end{equation} 
with some constant $C_d$ depending on $\mathcal D$ and $C_f$. Here, $|W(Y,Z)|$ is the length of the segment
of $W$ lying between $Y$ and $Z$. The proof of (\ref{eq:distorsion}) is analogous to that of Lemma 5.27 in 
 \cite{CM06} (observe that  $\frac{d}{d\tilde X} \ln \mathcal J_{\tilde W} (\Phi^{\mathfrak s})^{-1} (\tilde X)$ is bounded 
 if $\tilde W \subset {\bf G}$, other cases are analogous to (5.8) in \cite{CM06}).

Next, we define {\it standard pairs} for the map $\Phi^{{\mathfrak s}}$. A standard pair $\ell = (W, \rho)$ consist of 
a weakly homogeneous unstable curve $W$ and a probability density $\rho$ supported on $W$ which satisfies 
\begin{equation}
\label{eq:stpair}
\left| \ln \frac{d \rho}{d Leb} (X) - \ln \frac{d \rho}{d Leb} (Y) \right| \leq C_r \frac{|W(X,Y)|}{ |W|^{2/3}}
\end{equation}
where $C_r$ is some fixed big constant.

Now we are ready to state a key Lemma, which is often called the growth lemma.
\begin{lemma}
\label{lem:gr}
Let $\ell=(W, \rho)$ be a standard pair and $A$ a measurable set. Then
\begin{equation}
\label{eq:Markov}
 \mathbb E_\ell (A \circ (\Phi^{\mathfrak s})^n)
= \sum_{a} c_{a,n} \mathbb E_{\ell_{an}} (A),
\end{equation}
where $c_{a,n} >0$, $\sum_{a} c_{a,n} =1$; $\ell_{an}
= (W_{an}, \rho_{an})$ are standard pairs such that
$\cup_a W_{an} = (\Phi^{{\mathfrak s}})^n W$ and $\rho_{an}$ is the
push-forward of $ \rho$ by $(\Phi^{{\mathfrak s}})^n$ up to a multiplicative
constant. Finally, there are  constants $\varkappa,
C_1$ (depending
on $\mathcal D$ and $C_f$), such that
if $n > \varkappa |\log \length (W)|$, then
\begin{equation}
\label{EqGrowth}
 \sum_{\length (\ell_{an}) < \varepsilon} c_{a,n}
< C_1 \varepsilon. 
\end{equation}
\end{lemma}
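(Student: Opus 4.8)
The plan is to follow the standard proof scheme of the growth lemma for dispersing billiards (Chapter 5 of \cite{CM06}), transported to the time-$\mathfrak{s}$ map $\Phi^{\mathfrak{s}}$; the point of the present statement is not new phenomena but the verification that the homogeneity-domain decomposition introduced above and the stretching estimate \eqref{eq:stretch} together with the distortion bound \eqref{eq:distorsion} are exactly what the classical argument needs. First I would describe the canonical subdivision: given a standard pair $\ell=(W,\rho)$, iterate $\Phi^{\mathfrak{s}}$ once, cut the image into maximal pieces lying in a single homogeneity domain ${\bm G}$ or ${\bm H}_k$ (these are the smooth components, and the singularity set of $\Phi^{\mathfrak{s}}$ — preimages of tangential collisions plus the homogeneity-strip boundaries — meets any short unstable curve in only finitely many points because of the finite horizon condition), further subdivide any piece that happens to be longer than a fixed $\delta_0$, and equip each resulting curve $W_{an}$ with the normalized push-forward density $\rho_{an}$. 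The distortion bound \eqref{eq:distorsion} guarantees each $\rho_{an}$ again satisfies \eqref{eq:stpair} with the same $C_r$ (this is where the constant $C_r$ must be chosen large enough relative to $C_d$), so the $\ell_{an}$ are genuine standard pairs; the coefficients $c_{a,n}$ are the $\rho$-measures of the preimage pieces, hence positive and summing to $1$, and \eqref{eq:Markov} is just the change-of-variables/disintegration identity for the push-forward of the measure $\rho\,d\mathrm{Leb}$.

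The substantive content is \eqref{EqGrowth}. Here I would first prove the \emph{one-step} growth lemma: there exist $\theta<1$ and $C$ with
\[
\sum_{\length(\ell_{a,1})<\eps} c_{a,1} \;\le\; \theta\,\frac{\eps}{\length(W)} \;+\; C\,\eps
\]
for every standard pair $\ell$. The mechanism is the familiar competition between cutting and expansion. A short image curve $W_{a,1}$ of length $<\eps$ arises either because a genuinely short arc of $W$ got mapped near a singularity line, or because $W$ itself was short. The number of homogeneity domains a single unstable curve of bounded length can be cut into is controlled: crossing the strips ${\bm H}_k$ with $|k|\ge k_0$ costs at most a $\sum_k$-type bound, and on each such strip the local expansion factor from \eqref{eq:stretch} is correspondingly large (expansion near a $k$-th homogeneity strip is of order $k^2$), so the total $\rho$-weight landing in short pieces, divided by their new length, telescopes into a geometric factor $\theta<1$ from the uniform hyperbolicity $\Lambda>1$ of \eqref{eq:stretch}, while the finitely many ``structural'' cuts near the fixed singularity lines of the bounded-length map $\Phi^{\mathfrak{s}}$ contribute the additive $C\eps$. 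Choosing the one-time subdivision threshold $\delta_0$ and, if necessary, iterating the one-step estimate a bounded number $n_0$ of times so that $\theta^{n_0}<1/2$ absorbs the combinatorial constant, one upgrades this to: for $n\ge n_0$,
\[
\sum_{\length(\ell_{a,n})<\eps} c_{a,n} \;\le\; \tfrac12\Big(\tfrac{\theta}{\delta_0}\Big)^{?}\cdots
\]
— more cleanly, by induction along the lines of \cite[Lemma 5.56]{CM06}, one gets a bound of the form $C'(\vartheta^{n}/\length(W) + \eps)$ with $\vartheta<1$, and then for $n>\varkappa|\log\length(W)|$ with $\varkappa$ chosen so that $\vartheta^{\varkappa|\log\length(W)|}\le \length(W)$ the first term is $\le C'\eps$, giving \eqref{EqGrowth} with $C_1=2C'$.

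Two points need care, and the genuine obstacle is the first. (i) \emph{Complexity / finitely many singularity lines.} Unlike the billiard map $\mathcal{F}$ whose singularity set is a fixed finite union of smooth curves, the singularity set of $\Phi^{\mathfrak{s}}$ is $\{X: t^-(X)=\mathfrak{s}\}\cup\{X:\mathbf{P}^-(X)\in \text{homogeneity strip boundaries or tangencies}\}$; I must check that this is a finite union of smooth (stable) curves transverse to the unstable cone $\mathcal{C}_X$, with uniformly bounded complexity — i.e. that a short unstable curve is cut into a uniformly bounded number of pieces by the \emph{structural} part of this set, and that the only place where the piece-count is unbounded is the accumulation of homogeneity strips near grazing, which is precisely compensated by the unbounded expansion there. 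Since $\mathfrak{s}<\tau_{\min}$, $\Phi^{\mathfrak{s}}$ involves either zero or one collision, so the structural singularity set is controlled by that of $\mathcal{F}$ composed with a bounded-time flow, and this goes through exactly as in \cite{CM06}; this is the step I expect to spend the most effort writing honestly. (ii) \emph{One-step vs. many-step, and the $\varkappa|\log\length(W)|$ threshold.} One must run the one-step estimate for a fixed number of iterates until the geometric gain beats the additive complexity constant, then induct; the waiting time $n>\varkappa|\log\length(W)|$ is exactly the time it takes an initially short curve to grow to macroscopic size $\delta_0$ under the rate $\Lambda$ of \eqref{eq:stretch}, after which the bound \eqref{EqGrowth} stabilizes. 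All of this is routine once (i) is in place, and I would simply cite \cite[\S5.10--5.11]{CM06} for the bookkeeping, flagging that every ingredient — hyperbolicity \eqref{eq:stretch}, distortion \eqref{eq:distorsion}, bounded complexity — has been established above for $\Phi^{\mathfrak{s}}$.
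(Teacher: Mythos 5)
Your plan reduces, as you say, to verifying the one-step growth estimate
\begin{equation*}
\sum_{\length(\ell_{a,1})<\eps} c_{a,1} \;\le\; \theta\,\frac{\eps}{\length(W)} \;+\; C\,\eps
\qquad (\theta<1),
\end{equation*}
and you treat this as a routine transplant of the billiard-map argument, on the grounds that $\mathfrak{s}<\tau_{\min}$ forces at most one collision per step and hence bounded structural complexity. The gap is that bounded complexity is not enough: one also needs the expansion factor on \emph{each} piece to be bounded away from $1$, and for $\Phi^{\mathfrak{s}}$ this fails. Indeed, \eqref{eq:stretch} only gives $\mathcal J_W(\Phi^{\mathfrak{s}})^n\ge C\Lambda^n$ with a constant $C$ that may well be $<1$, so the one-step expansion $\mathcal J_W\Phi^{\mathfrak{s}}$ can be arbitrarily close to $1$. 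The paper makes exactly this point: if $W$ contains both points that undergo a near-perpendicular collision within time $\mathfrak{s}$ with a scatterer of small curvature (post-collision wave curvature, hence expansion, close to $1$) and points that do not collide at all within time $\mathfrak{s}$ (free flight on a nearly flat wave, again expansion close to $1$), then $\sum_i\lambda(W_i)$ exceeds $1$ even after adapting the norm. So the one-step expansion estimate \eqref{eq:1stepexp}, on which your whole induction rests, is simply false here, and $\theta<1$ cannot be extracted in the way you describe.

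What the paper does instead is prove an $N$-step expansion estimate \eqref{eq:Nstepexp} for a fixed large $N$, and derive the growth lemma from that (in the spirit of billiards with corner points, cf.\ \cite{Ch99, DST14}). To do this one needs the complexity of $(\Phi^{\mathfrak{s}})^n$, and here is a second place where your sketch is too optimistic: the singularity set of $\Phi^{\mathfrak{s}}$ has an extra component beyond tangencies, namely $\{t^-(X)=\mathfrak{s}\}$, and over $n$ steps these ``clock'' cuts accumulate. The resulting bound is quadratic, $K_n(\delta)<Ln^2$, not linear: the tangential singularities of $\mathcal{F}$ contribute $O(n)$ cuts as in \cite{BSCh90}, and within each resulting piece $W_\alpha$ the projected curve $\Pi_q(\Phi^{\mathfrak{s}})^iW_\alpha$ is convex, so at most two more points of $W_\alpha$ can collide exactly at time $(i+1)\mathfrak{s}$, giving $O(n)$ further cuts per piece, hence $O(n^2)$ total. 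One then chooses $N$ with $LN^3<\Lambda^N$ so that the cumulative expansion dominates the polynomial complexity for the pieces that avoid the thin homogeneity strips, and chooses $k_0$ large to control the rest. Your proposal does not contain the $N$-step estimate, the quadratic complexity count, or any mechanism to cope with multiple image pieces of near-unit expansion; without one of these, the induction you outline does not close.
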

To fix terminology, we will call \eqref{eq:Markov} a Markov decomposition.

The difference between the proof of Lemma \ref{lem:gr} and analogous lemmas for $\mathcal F$ (Section 5.10 and 7.4
in \cite{CM06}) is slightly more substantial than in case of the previous statements. Namely, the proof in 
\cite{CM06} for the case of
$\mathcal F$ is based on the one-step expansion estimate
\begin{equation}
\label{eq:1stepexp}
\liminf _{\delta \rightarrow 0} \sup_{\mathcal W: |\mathcal W|< \delta } \sum_i \lambda (\mathcal W_i) <1,
\end{equation}
where $\mathcal W$ is an unstable curve for the map $\mathcal F$, 
$\mathcal W_i$ are the H-components of $\mathcal F(\mathcal W)$ and $\lambda (\mathcal W_i)$ 
is the maximal contraction factor of 
$\mathcal F^{-1}$ on $\mathcal W_i$. (These have similar definitions to ours, see \cite{CM06}). 
We note that \eqref{eq:1stepexp} cannot hold for our case. Indeed, if $W$ is such an unstable curve
that some of its points experience a near perpendicular 
collision within time $\mathfrak s$ with a scatterer of small curvature and some other 
points do not collide within time 
$\mathfrak s$, than the one-step expansion is violated (even with an adapted norm). That is why we prove
the $N$-step expansion instead (with some suitable $N$):
\begin{equation}
\label{eq:Nstepexp}
\liminf _{\delta \rightarrow 0} \sup_{W: |W|< \delta } \sum_i \lambda (W_{i,N}) <1,
\end{equation}
where $W_{i,N}$ are the H-components of $(\Phi^{\mathfrak s})^N W$ 
and $\lambda (W_i)$ is the maximal contraction factor of 
$(\Phi^{\mathfrak s})^{-N} $ on $W_{i,N}$. 
We note that similar $N$-step expansions have been used several times, e.g. in case of billiards with corner
points \cite{Ch99, DST14}.
%\todo[inline]{Are you aware of earlier references?}

The proof of \eqref{eq:Nstepexp}
relies on a complexity estimate which we derive next.
First, 
we write $\Omega = \bm H \cup \bm G$ (modulo a set of zero $\mu$ measure), where 
$\bm H = \{ X \in \Omega: t^-(X) < \mathfrak s\}$ (and consequently
$\overline{ \bm H} = \cup \overline{ \bm H_k}$).
Note that $\Phi^{\mathfrak s}$ is continuous on the domains $\bm H$ and $\bm G$. 
For an unstable curve $W$ we say that a sequence $(A_1, ..., A_n) \in \{ \bm H, \bm G\}^n$ is admissible
if there is some point $X \in W$ such that $(\Phi^{\mathfrak s})^i \in A_i$ for all $i = 1,...,n$.
Next, we define
\[
K_n(\delta)  =\max_{W: |W| < \delta} \# \{ \text{admissible sequences } (A_1, ..., A_n)\}.
\]
Now our complexity bound is the following. There exists some $L<\infty$ such that for all $n >0$,
\begin{equation}
\label{complexity}
\lim_{\delta \rightarrow \infty} K_n(\delta) < L n^2.
\end{equation}
In order to derive \eqref{complexity}, we first recall the complexity bound of the map $\mathcal F$ from \cite{BSCh90}.
The complexity
$\mathcal K_n(\delta)$ of the map $\mathcal F$ (that is the maximal number the singularity set 
$\{ \varphi = \pm \pi /2\}$ can cut an unstable curve of length $\leq \delta$ during $n$ iteration) satisfies
\begin{equation}
\label{complexityold}
\lim_{\delta \rightarrow \infty} \mathcal K_n(\delta) < A n.
\end{equation}
There is two reasons why $(\Phi^{\mathfrak s})^n$ can cut an unstable curve: grazing collisions and 
collisions at times which are integer multiples of $\mathfrak s$. The grazing collisions are liable for the fragmentation
of unstable curves of the map $\mathcal F$, hence can be bounded by \eqref{complexityold}.
Namely, for fixed $n$ and for small enough $\delta$, $W$ can be cut into pieces $W_{\alpha}$, $\alpha \leq Bn$
such that all $X,Y$ belonging to the same piece $W_{\alpha}$ 
collide on the same sequence of scatterers during flow time $n \mathfrak s$. 
Indeed, as the number of collisions during flow time $n \mathfrak s$ is bounded by 
$n \mathfrak s / \tau_{\min}$, we can choose $B = A \mathfrak s / \tau_{\min}$.
%\todo[inline]{Why $n^2$?}
%Indeed, as the number of collisions is in the interval 
%$[n \mathfrak s / \tau_{\max}, n \mathfrak s / \tau_{\min}]$, one can choose
%$B =  A \mathfrak s / \tau_{\min}$. 
Now pick some $W_{\alpha}$. Observe that by definition, 
$\Pi_{q} (\Phi^{\mathfrak s})^i W_{\alpha}$,
the projection of the unstable curve $(\Phi^{\mathfrak s})^i W_{\alpha}$ to the configuration space, is convex.
Consequently there can be at most $2$ points on $ W_{\alpha}$ which collide exactly at time $(i+1) \mathfrak s$.
We conclude that each $W_{\alpha}$ is cut into at most $2n+1$ pieces which proves \eqref{complexity}
(with say $L = 3A$).

The derivation of \eqref{eq:Nstepexp} from \eqref{complexity} goes along the lines of the proof of Lemma 5.56 in
\cite{CM06}. (We need to choose $N$ such that $LN^3 < \Lambda^N$ so as to bound $\sum \lambda (W_{i,N})$
for $i$'s which never visited the nearly grazing domains $\bm H_k$ with $|k| \geq k_0$, and choose 
$k_0$ big to bound the remaining terms.) Finally, the proof of Lemma \ref{lem:gr} based on
\eqref{eq:Nstepexp} is again similar to the usual argument (see also \cite{Ch99, DST14}).

For standard pairs $\ell = (W, \rho)$, we will write $\mathbb E_{\ell}$ for 
the integral with respect to $\rho$ and $\mathbb P_{\ell} (A) = \mathbb E_{\ell} (1_{A})$.
A standard family is a weighted average of standard pairs: $\mathcal G =
(W_{\alpha}, \rho_{\alpha})$, $\alpha \in \mathfrak A$ and a measure 
$\lambda_{\mathcal G}$ on the (possibly infinite) index set $\mathfrak A$.
The $\mathcal Z$-function of $\mathcal G$ is defined by 
\[
\mathcal Z_{\mathcal G} = \sup_{\varepsilon >0} \frac{\int_{\mathfrak A} 
\mathbb P_{\ell_{\alpha}} (r_{\mathcal G} < \varepsilon)
d\lambda_{\mathcal G}}{\varepsilon}.
\]
Here $r_{\mathcal G}(X)$ is the distance of $X$ and the closest endpoint of $W_{\alpha}$, 
where $W_{\alpha} \ni X$. With these notations, Lemma
\ref{lem:gr} can be shortly reformulated by saying that $\mathcal G_n$, the image of a standard family $\mathcal G$
under $(\Phi^{\mathfrak s})^n$
is a standard family
and there are constants $\vartheta <1$, $\beta_1$ and $\beta_2$ (depending on $\mathcal D$ and $C_f$) such that
$\mathcal Z_{\mathcal G_n} \leq \beta_1 \vartheta^n \mathcal Z_{\mathcal G} + \beta_2$.

One very important example for a standard family is the decomposition of the SRB measure $\mu$ 
to local unstable manifolds. The fact that the $\mathcal Z$ function is finite is far from being obvious, but 
it follows from
early works of Sinai. (See Theorem 5.17 in \cite{CM06} for the case of $\mathcal F$; our case is analogous).

We conclude this section with a stretched exponential bound on the decay of correlations with respect to a 
standard pair. First, we recall a few definitions from \cite{Ch07}. For some function $F : \Omega \rightarrow 
\mathbb R$, $x \in \Omega$ and $r>0$, we write 
$\mathrm{osc}_r(F,x) = \sup_B F - \inf_B F $, where $B$ is the ball of radius
$r$ centered at $x$. We say that $F$ is generalized H\"older continuous with exponent $\alpha \in (0,1]$ if
\[
\| F\|_{\alpha} = \sup_r r^{-\alpha} \int_{\Omega} \mathrm{osc}_r (F,x) d \mu (x) < \infty,
\]
and write $\mathrm{var}_{\alpha} (F) = \| F \|_{\alpha} + \sup_{\Omega} F - \inf_{\Omega} F$.

\begin{theorem}
\label{thm:DEC} 
Let $\ell = (W,\rho)$ be a standard pair and 
let $F:\Omega \to \mathbb R$ be generalized H\"older continuous with parameter $\alpha$ and 
$\int_{\Omega} F d \mu=0$. Then
 $$\left| \mathbb E _{\ell} (F\circ \Phi^t ) \right| \le |W|^{-1} C \mathrm{var}_{\alpha} (F) e^{-a \sqrt{t}},$$
 with
constants $C, \alpha $ depending only on $\mathcal D$, $C_f$ and $\alpha$.
\end{theorem}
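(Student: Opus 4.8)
The plan is to pass from the flow $\Phi^t$ to iterates of the map $\Phi^{\mathfrak s}$ --- for which the growth lemma (Lemma~\ref{lem:gr}) is available --- and then to run a coupling argument; the stretched--exponential, rather than exponential, rate will be forced by the superlinear complexity \eqref{complexity} exactly as in \cite{Ch07}. First I would write $t=n{\mathfrak s}+r$ with $n\in\mathbb N$, $0\le r<{\mathfrak s}$, and set $\tilde F=F\circ\Phi^{r}$, so that $F\circ\Phi^{t}=\tilde F\circ(\Phi^{\mathfrak s})^{n}$. Since ${\mathfrak s}=\tau_{\min}/2$, the map $\Phi^{r}$ produces at most one collision; it is bi--Lipschitz off a neighbourhood of the grazing set whose $\mu$--measure at scale $\delta$ is $O(\sqrt\delta)$, so $\tilde F$ is again generalized H\"older (with a possibly smaller exponent) and $\mathrm{var}(\tilde F)\le C\,\mathrm{var}_\alpha(F)$; moreover $\int\tilde F\,d\mu=0$ by $\Phi^{r}$--invariance of $\mu$. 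As $n\asymp t$ for $t$ large and the claim is trivial for bounded $t$, the theorem reduces to the estimate
\begin{equation}
\label{eq:maplevel}
\bigl|\mathbb E_\ell\bigl(G\circ(\Phi^{\mathfrak s})^{n}\bigr)\bigr|\le |W|^{-1}C\,\mathrm{var}_\alpha(G)\,e^{-a\sqrt n}
\end{equation}
for every standard pair $\ell=(W,\rho)$ and every generalized H\"older $G$ (of any exponent $\alpha$) with $\int G\,d\mu=0$.

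To prove \eqref{eq:maplevel} I would first dispose of the trivial range: if $n\le a^{-2}(\log|W|)^2$ then $|W|^{-1}e^{-a\sqrt n}\ge1$ and the left side is at most $\sup G-\inf G\le\mathrm{var}_\alpha(G)$, so there is nothing to prove; hence I may assume $n$ large compared with $n_1:=\varkappa|\log|W||$ (for $\varkappa$ large enough). By Lemma~\ref{lem:gr} and the $\mathcal Z$--function contraction $\mathcal Z_{\mathcal G_n}\le\beta_1\vartheta^{n}\mathcal Z_{\mathcal G}+\beta_2$, the family $\mathcal G^{(1)}:=(\Phi^{\mathfrak s})^{n_1}\ell$ is proper ($\mathcal Z_{\mathcal G^{(1)}}\le2\beta_2$) and, by the Markov decomposition \eqref{eq:Markov}, $\mathbb E_\ell(G\circ(\Phi^{\mathfrak s})^{n})=\mathbb E_{\mathcal G^{(1)}}(G\circ(\Phi^{\mathfrak s})^{m})$ with $m=n-n_1$. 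Next I would couple $\mathcal G^{(1)}$ with the (proper) decomposition $\mathcal G_\mu$ of $\mu$ into local unstable manifolds, using the Chernov--Markarian scheme (\cite{CM06}, Ch.~7) with Lemma~\ref{lem:gr} in place of the growth lemma for $\mathcal F$: at a sequence of coupling epochs one matches, along stable manifolds of $\Phi^{\mathfrak s}$ --- which by Section~\ref{sec:pre} are the flow's stable manifolds --- positive fractions of the two families inside magnet rectangles and contracts the rest. Because \eqref{complexity} grows like $n^2$ and not linearly, the coupling does not proceed at a uniform exponential rate; carrying out the optimization of \cite{Ch07} one obtains that the mass still uncoupled after $m$ steps is $\le Ce^{-a\sqrt m}$, while a pair $(X,Y)$ coupled before step $m/2$ lies on a common flow--stable manifold and hence $\dist\bigl((\Phi^{\mathfrak s})^{m}X,(\Phi^{\mathfrak s})^{m}Y\bigr)\le C\Lambda_s^{-cm}$ for a suitable $c>0$, the number of collisions in flow time $m{\mathfrak s}$ being at least a constant multiple of $m$ by \eqref{eq:stretch} and $\tau_{\max}<\infty$. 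Writing $\Pi$ for the coupling measure of $\mathcal G^{(1)}$ and $\mu$ and using $\mu$--invariance on the second coordinate, the assembly reads
\begin{align*}
\Bigl|\mathbb E_{\mathcal G^{(1)}}\bigl(G\circ(\Phi^{\mathfrak s})^{m}\bigr)-\int G\,d\mu\Bigr|
&\le\int\bigl|G\bigl((\Phi^{\mathfrak s})^{m}X\bigr)-G\bigl((\Phi^{\mathfrak s})^{m}Y\bigr)\bigr|\,d\Pi\\
&\le\int\mathrm{osc}_{C\Lambda_s^{-cm}}(G,\cdot)\,d\mu+\bigl(\sup G-\inf G\bigr)\,Ce^{-a\sqrt m}\\
&\le\|G\|_\alpha\bigl(C\Lambda_s^{-cm}\bigr)^{\alpha}+\mathrm{var}_\alpha(G)\,Ce^{-a\sqrt m};
\end{align*}
since $\int G\,d\mu=0$ and $m=n-n_1\ge n/2$ in the range considered, and $|W|^{-1}\ge1$, this yields \eqref{eq:maplevel}, and undoing the first reduction proves the theorem.

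The substantive step --- and essentially the only one not already contained in \cite{CM06} --- is the coupling of the previous paragraph: one must check that the construction and the tail bound of \cite{Ch07} survive when the growth lemma of \cite{CM06} is replaced by Lemma~\ref{lem:gr} and the linear complexity \eqref{complexityold} by the quadratic complexity \eqref{complexity} (this is precisely the replacement that degrades the exponential tail to $e^{-a\sqrt m}$), and that starting from a standard pair rather than from $\mu$ costs nothing beyond the burn--in and the harmless factor $|W|^{-1}$ produced by the trivial range. The remaining ingredients --- uniform hyperbolicity \eqref{eq:stretch}, the distortion bound \eqref{eq:distorsion}, finiteness of $\mathcal Z_{\mathcal G_\mu}$, and the generalized--H\"older bookkeeping for $\tilde F$ --- are routine.
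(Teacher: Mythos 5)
The paper does not actually prove Theorem~\ref{thm:DEC}: it cites Chernov's sketch in \cite{Ch07} (with details to appear in \cite{B16}) for contact unstable curves, and then reduces general unstable curves to that case by the smoothing of Corollary~1.2 of \cite{Ch07} after chopping $W$ into pieces of length $\leq \varepsilon/C_f$. Your proposal, by contrast, tries to reprove the statement from scratch by reducing to iterates of $\Phi^{\mathfrak s}$ and running a Chernov--Markarian coupling. This is a genuinely different route, and unfortunately it has a gap in its central step.

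The gap is the coupling itself. The Chapter~7 scheme of \cite{CM06} couples two families of $u$-curves by matching them along stable manifolds inside a magnet rectangle, and it works because for the billiard map $\mathcal F$ the phase space $\mathcal M$ is two--dimensional and is (locally, up to a Cantor structure) exhausted by stable~$\times$~unstable. For $\Phi^{\mathfrak s}$ the phase space $\Omega$ is three--dimensional while $W^s$ and $W^u$ are each one--dimensional and span (approximately) only $\mathcal T^\perp_X\Omega$, leaving the neutral flow direction. A ``magnet rectangle'' built from stable and unstable leaves is therefore a two--dimensional object with $\mu$--measure zero, and you cannot transfer a positive fraction of the mass of $\mathcal G_\mu$ onto it. To couple a family of $1$-dimensional $u$-curves to the $3$-dimensional measure $\mu$ you must also match in the flow direction, and then coupled pairs $(X,Y)$ are not on a common stable manifold: they differ by a flow shift $u$ that is not contracted by $(\Phi^{\mathfrak s})^m$, so $|F((\Phi^{\mathfrak s})^m X)-F((\Phi^{\mathfrak s})^m Y)|$ is controlled by $\mathrm{osc}_{u}(F)$ rather than by $\mathrm{osc}_{C\Lambda_s^{-cm}}(F)$. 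Handling this shift --- by discretizing the flow direction and using mixing of the base to equidistribute the shifts --- is precisely the content of Chernov's argument in \cite{Ch07}, and it is the true source of the $e^{-a\sqrt t}$ rate. Your proposal silently assumes the shift is zero, and if that assumption were correct the argument would yield exponential decay outright, which contradicts the paper's own remark that upgrading to exponential requires the quite different transfer--operator methods of \cite{BDL15}.

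Relatedly, your attribution of the stretched--exponential rate to the quadratic complexity \eqref{complexity} is not correct. The complexity bound only forces the $N$ in the $N$-step expansion \eqref{eq:Nstepexp} to be a larger (but still fixed) constant so that $LN^3<\Lambda^N$; once \eqref{eq:Nstepexp} holds, Lemma~\ref{lem:gr} holds with fixed constants $\varkappa,C_1,\vartheta$ and the growth/coupling machinery runs at exponential speed in the number of $\Phi^{\mathfrak s}$-iterations. So the complexity degrades constants, not the rate. Apart from this, the bookkeeping in your proposal --- the reduction $t=n\mathfrak s+r$, the trivial range $n\lesssim(\log|W|)^2$, the burn--in via the $\mathcal Z$-function contraction, and the passage from contact to general unstable curves --- is sound and in the spirit of the paper; it is only the coupling step that needs the additional idea from \cite{Ch07} to deal with the neutral direction.
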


%For the precise definition of generalized H\"older continuity, see \cite{Ch07}.
%\todo[inline]{Define  generalized H\"older functions and $\mathrm{var}_\alpha$.}
A 
sketch of proof of Theorem \ref{thm:DEC} was given by Chernov in \cite{Ch07} for contact unstable curves
(a more detailed proof will be provided in \cite{B16}).
For general unstable curves, one can apply the same smoothening as in Corollary 1.2 of 
\cite{Ch07} after chopping $W$ to pieces of length $\leq \varepsilon/C_f$.
We note that a recent paper \cite{BDL15} obtains exponential mixing for smooth observable. 
Using this results it seems likely that the bound of Theorem \ref{thm:DEC} can be improved to
exponential. However, we do not pursue this question here since the bounds of \cite{Ch07} and \cite{B16}
are sufficient for our purposes.

%%%%%%%%%%%%%%%%%%%%%%%%%%%%%%%%%%%%%%%%%%%%%%%%%%%%%%%

\section{Proof of Theorem 1}

\subsection{Idea of the proof}
%First we explain the general idea of the proof. 
We will say that a time $t$ is microscopic if $t \leq 1$, mesoscopic if $1<t \leq \delta'/\varepsilon $
and macroscopic if $\delta' / \varepsilon <t$.

For most of the proof, we fix $(q_1, \varphi_1)$ and use the results of Sinai billiards (cf. Section
\ref{sec:pre}) for the second particle.

Recall that a random variable $\bT$ has exponetial distribution with parameter $\rho$ if
\begin{equation}
\label{InfinitesimalExp}
\Prob(\bT\in [t, t+\delta']|\bT>t)=\rho \delta'(1+o(1)). 
\end{equation}
Therefore in our setting we need to show that probability that the first close encounter
happens during the time $[\frac{t}{\eps}, \frac{t+\delta'}{\eps}]$ given that there was no collision
in the past equals to $\rho(\lambda)\delta'(1+o(1)).$ More precisely, in order to ensure the near independence
of consecutive intervals it is convenient to introduce short buffer zone between them. The fact that
the first collision is unlikely to fall to a buffer zone then follows by Markov inequality.
To estimate the collision probability during an interval of size $\delta'/\eps$ we divide it into intervals
of length $\delta\ll 1.$ Using elementary geometry
we show that the measure of the trajectories having a close encounter during such an interval
equals to $\rho(\lambda)(1+o(1)).$ Summing the probabilities along all short intervals in a given
interval of size $\delta'/\eps$ we get $\rho(\lambda)\delta'.$  
Thus to obtain \eqref{InfinitesimalExp} for our system we need to show that recollisions
have smaller order if $\delta'$ is small. To prove this we first estimate a measure of trajectories having
more than one small encounter and then use mixing to accommodate conditioning on the past.
The recollisions happening during relatively separated times (that is if the collision times
are at least $\ln^{100} \eps$ apart) are ruled out by mixing, while fast recollisions are handled
by a geometric argument. Namely we show that such recollisions are easily destroyed if we tune
the speed of the second paprticle. This is the only part of the proof which does not work for {\bf all}
values of $\lambda.$ At this step we also need to exclude the encounters where the particles 
either have almost parallel velocities or are close to the scatterers.

%First, we show that the rate of collision is given by $\rho$ for short macroscopic time $\delta$.
%Then, we cut the time interval $T/\varepsilon$ into short macroscopic pieces of length $\delta'/\varepsilon$
%Then we try to extend the argument to a short macroscopic time $\delta'/\varepsilon$.
%The upper bound easily follows but for the lower bound, we have to exclude recollisions. We do so for some 
%mesoscopic time by slightly changing $\lambda$ (this is the reason why we have a statement for a.e. $\lambda$)
%and by discarding encounters with either almost parallel velocities or close to the scatterers.
Specifically, we introduce
\begin{eqnarray*}
\mathcal A^{\xi}_{\lambda, t,\varepsilon} &=& 
\{ (q_1,\varphi_1), (q_2, \varphi_2): \exists s \in [0, t]: \\
&&
\| \Pi_q \Phi^s (q_1, \varphi_1) - \Pi_q 
\Phi^{\lambda s} (q_2, \varphi_2) \|  \leq \varepsilon,\\
&& \dist(\Pi_q \Phi^s (q_1, \varphi_1), \partial \mathcal D) > \xi,\\
&& | \Pi_ {\varphi} \Phi^s (q_1, \varphi_1) - \Pi_{\varphi} 
\Phi^{\lambda s} (q_2, \varphi_2) \quad (\text{mod }\pi) |  > \xi
\}.
\end{eqnarray*}
and
\begin{equation}
\label{eq:Afixedq1v1v0}
\mathcal A_{\lambda, \delta, \varepsilon}^{\xi} ( q_1,\varphi_1) 
= \{ (q_2,\varphi_2) : (q_1,\varphi_1,q_2,\varphi_2) \in  A_{\lambda, \delta, \varepsilon}^{\xi}\}.
\end{equation}

%If we had independence among small macroscopic times, Theorem \ref{thm1} would now follow. As we do not
%have independence, we will use the big block - small block technique, the Markov decomposition
%and the decay of correlations over the small blocks to recover asymptotic independence.

The order of choice of the parameters  can be summarized as
$$ \varepsilon \ll \delta \ll  \delta' \ll \xi \ll 1$$
(for each inequality $ \ll $ we impose finitely many upper bounds along the way, it is possible to take 
the smallest one).

To simplify notation, we will say that the two particles have a "good collision" at time $s$ if the last 
three lines of the definition of $\mathcal A^{\xi}_{\lambda, t,\varepsilon}$ are true
(note that this is not a real collision, and the notion is dependent on $\lambda$, $\varepsilon$ and $\xi$).

\begin{remark}
We note that the strategy of using \eqref{InfinitesimalExp} for proving exponential distribution for hitting times to
small sets with relying on mixing to handle the mesoscopic return times and on geometry to handle short return
times is by now pretty standard in the dynamical systems literature (see e.g. \cite{CC13, D04, H14, R14} 
and reference therein), however the implementation depends very much on the system at hand.
\end{remark}

\subsection{Microscopic and mesoscopic time}

Our first lemma concerns microscopic timescales:
\begin{lemma}
\label{lem1}
For all $\lambda \in (0,1]$,
$$\lim_{\xi \rightarrow 0}
\lim_{\delta \rightarrow 0} \frac{1}{\delta} \lim_{\varepsilon \rightarrow 0} \frac{1}{\varepsilon}
(\mu \times \mu) (\mathcal A^{\xi}_{\lambda,\delta,\varepsilon})
= \rho(\lambda)$$
% $$
 %\lim_{\varepsilon \rightarrow 0} \frac{1}{\varepsilon} \mu \times \mu (\mathcal A_{\lambda,1,\varepsilon})
%= \rho(\lambda) $$
\end{lemma}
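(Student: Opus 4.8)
The plan is to reduce the event $\mathcal A^{\xi}_{\lambda,\delta,\varepsilon}$ to a purely straight-line event and then to evaluate its $(\mu\times\mu)$-measure by elementary planar geometry. Write $v_i=(\cos\varphi_i,\sin\varphi_i)$, so that, as long as no reflection occurs, particle $1$ is at $q_1+sv_1$ and particle $2$ at $q_2+s\lambda v_2$ at time $s$. The first step is the observation that, once $\varepsilon\ll\delta\ll\xi$, \emph{on $\mathcal A^{\xi}_{\lambda,\delta,\varepsilon}$ neither particle meets $\partial\mathcal D$ during $[0,\delta]$}: if a good collision occurs at some $s^{\ast}\in[0,\delta]$, then at that instant particle $1$ lies at distance $>\xi$ from $\partial\mathcal D$, hence particle $2$, being $\varepsilon$-close to it, lies at distance $>\xi-\varepsilon$ from $\partial\mathcal D$; were either particle to reflect at some $s_r\in[0,\delta]$, then, having speed $\le1$ and sitting on $\partial\mathcal D$ at time $s_r$, it would be within $|s^{\ast}-s_r|\le\delta<\xi-\varepsilon$ of $\partial\mathcal D$ at time $s^{\ast}$ — a contradiction. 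Consequently the true orbits agree on $[0,\delta]$ with the straight segments above, and the defining condition of $\mathcal A^{\xi}_{\lambda,\delta,\varepsilon}$ becomes: there is $s\in[0,\delta]$ with $|(q_1-q_2)+s(v_1-\lambda v_2)|\le\varepsilon$, with $\dist(q_1+sv_1,\partial\mathcal D)>\xi$, and with $|\varphi_1-\varphi_2\ (\text{mod }\pi)|>\xi$; conversely, any configuration satisfying these lies in $\mathcal A^{\xi}_{\lambda,\delta,\varepsilon}$, because the straight segment is then the actual orbit. Thus $\mathcal A^{\xi}_{\lambda,\delta,\varepsilon}$ is exactly this straight-line set.

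Next I would compute $(\mu\times\mu)(\mathcal A^{\xi}_{\lambda,\delta,\varepsilon})$ by Fubini, integrating over $q_2$ first with $(q_1,\varphi_1,\varphi_2)$ held fixed. The configurations with $\dist(q_1,\partial\mathcal D)\le\xi+\delta$ or with $|\varphi_1-\varphi_2\ (\text{mod }\pi)|\le\xi$ contribute, after division by $\varepsilon\delta$, only $O(\xi)$, and so disappear in the outer limit $\xi\to0$. On the remaining set the first $\xi$-condition holds for every $s\in[0,\delta]$, so the admissible $q_2$ are exactly those for which $q_1-q_2$ lies within distance $\varepsilon$ of the segment $\{-s(v_1-\lambda v_2):s\in[0,\delta]\}$ of length $\delta\,|v_1-\lambda v_2|$. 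This $\varepsilon$-tube lies inside $\mathcal D$ (being within $O(\delta)\ll\xi$ of $q_1$) and has planar area of order $\varepsilon\delta\,|v_1-\lambda v_2|$ with an $O(\varepsilon^2)$ correction coming from its two rounded ends. Integrating this over $q_1$ (which gives $|\mathcal D|$ up to the $O(\xi)$ boundary error), then over $(\varphi_1,\varphi_2)$ using $|v_1-\lambda v_2|^2=1-2\lambda\cos(\varphi_1-\varphi_2)+\lambda^2$ and the rotation-invariance identity
\[
\int_0^{2\pi}\!\!\int_0^{2\pi}\sqrt{1-2\lambda\cos(\varphi_1-\varphi_2)+\lambda^2}\,d\varphi_1\,d\varphi_2=2\pi\int_0^{2\pi}\sqrt{1-2\lambda\cos\varphi+\lambda^2}\,d\varphi ,
\]
and keeping track of the normalisation $c_\mu=\frac{1}{2\pi|\mathcal D|}$, one is led — after dividing by $\varepsilon\delta$ and sending first $\varepsilon\to0$ (which removes the $O(\varepsilon^2)$ term) and then $\xi\to0$ — precisely to the quantity $\rho(\lambda)$ of \eqref{eq:rho1}. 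This establishes Lemma~\ref{lem1}.

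The computation itself is soft; the two points that genuinely require care are: (i) the reduction of the first paragraph, in which the $\xi$-condition, imposed only at the encounter time $s^{\ast}$, must be propagated over all of $[0,\delta]$ by means of $\delta\ll\xi$ — this is needed both to forbid reflections and, in the converse direction, to ensure that a geometric straight-line encounter is actually realised by the billiard flow; and (ii) verifying that each discarded contribution (initial positions within $O(\xi)$ of $\partial\mathcal D$; almost-parallel velocity pairs; the $O(\varepsilon^2)$ caps of the tube; and, when $\lambda$ is close to $1$, velocity pairs with $|v_1-\lambda v_2|$ as small as $O(\varepsilon/\delta)$, for which the caps are not negligible) is indeed of lower order once the limits are taken in the prescribed order $\varepsilon\to0$, $\delta\to0$, $\xi\to0$. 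I do not anticipate a serious obstacle in Lemma~\ref{lem1} itself: it is the elementary geometric input of the paper, and the substantive difficulties (ruling out fast and mesoscopic recollisions, and conditioning on the past via the mixing estimate of Theorem~\ref{thm:DEC}) only enter later in the proof of Theorem~\ref{thm1}.
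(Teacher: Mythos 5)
Your proof follows the same approach as the paper: use $\delta\ll\xi$ to reduce to free flight on $[0,\delta]$ (so that the collision condition becomes a purely straight-line condition), discard the $O(\xi)$-measure near-boundary and near-parallel configurations, and compute the measure of the resulting $\varepsilon$-tube, integrating out the angle variables. The only difference is cosmetic bookkeeping---the paper fixes $(q_1,\varphi_1)$, parametrizes the exact-collision surface $\mathcal A^{\xi}_{\lambda,\delta,0}(q_1,0)\subset\mathbb R^3$ by $(t,\varphi_2)$ and first computes its two-dimensional Hausdorff measure before thickening by $\varepsilon$, whereas you fix $(q_1,\varphi_1,\varphi_2)$ and compute the planar tube area in $q_2$ directly before integrating over $\varphi_2$---and your discussion of the discarded sets and of the $O(\varepsilon^2)$ end-caps is, if anything, slightly more explicit than the paper's ``clearly we can assume.''
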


\begin{proof} 
Clearly, we can assume that $\dist( q_1, \partial \mathcal D )> 2 \xi$ (which has $\mu$-measure $1-O(\xi)$). 
Then, if the 
two point particles collide within time $\delta$, then necessarily 
$\dist( q_2, \partial \mathcal D )> \xi$ which means that none of the particles collide with
the boundary of the billiard table within time $\delta$ and in particular there can be no more
than one binary collision. 
Now we fix $q_1$ as above and w.l.o.g. write $\varphi_1 = 0$. Then we compute the two dimensional measure of the surface
\begin{equation}
\label{eq:Afixedq1v1}
\mathcal A_{\lambda,\delta, 0}^{\xi}(q_1, 0) = \{(q_2,\varphi_2): (q_1,0,q_2,\varphi_2 )\in \mathcal A^{\xi}_{\lambda,\delta,0}\}
\subset \mathbb R^3.
\end{equation}

The collision takes place at time $t \in [0,\delta]$. Now we have the following parametrization of this surface:
\begin{eqnarray*}
&& u: [0, \delta] \times I_{\xi} \rightarrow \mathbb R^3 \text{ with } 
I_{\xi} = ([\xi, \pi - \xi] \cup [\pi + \xi, 2 \pi - \xi]), \\
&&u(t,\varphi_2) = (t-\lambda t \cos \varphi_2, \lambda t \sin \varphi_2, \varphi_2) + (q_1, 0)
\end{eqnarray*}
Then 
\begin{eqnarray*}
 &&Leb_2(\mathcal A_{\lambda,\delta, 0}^{\xi}(q_1, 0)) \\
&=& \int_{t=0}^{\delta} \int_{\varphi_2 \in I_{\xi}} \sqrt{1 - 2 \lambda \cos \varphi_2 + 
\lambda^2 + \lambda^2 t^2 \cos^2 \varphi_2 - 2 \lambda^3 t^2 \cos \varphi_2 + \lambda^4 t^4} d \varphi_2 dt\\
&\sim& \delta 
\int_{\varphi_2 \in I_{\xi} } \sqrt{1 - 2 \lambda \cos \varphi_2 + 
\lambda^2} d\varphi_2 
\end{eqnarray*}
as $\delta \rightarrow 0$.
Now the asymptotics for $\varepsilon \rightarrow 0$
\begin{eqnarray*}
 &&\mu(q_2, \varphi_2: (q_1,0,q_2,\varphi_2 )\in \mathcal A^{\xi}_{\lambda,\delta,\varepsilon}) \\
&=& \frac{1}{2 \pi |\mathcal D|} Leb_3(q_2, \varphi_2: (q_1,0,q_2,\varphi_2 )\in \mathcal A^{\xi}_{\lambda,\delta,\varepsilon}) \\
&\sim&  
\frac{1}{2 \pi |\mathcal D|} \varepsilon Leb_2(\mathcal A_{\lambda,\delta, 0}^{\xi}(q_1, 0)) %=: \varepsilon \hat \rho (\lambda)
\end{eqnarray*}
%and the elementary estimate
%$$ 
%\lim_{\delta \rightarrow 0} \frac{1}{\delta} 
%\lim_{\varepsilon \rightarrow 0} \frac{1}{\varepsilon}
%\mu \times \mu 
%\{ (q_1, \varphi_1, q_2, \varphi_2) \in \mathcal A_{\lambda,\delta,\varepsilon}, 
%dist( (q_1), \partial \mathcal D ) \leq 2 \delta  \} = O(\delta^2)
%$$
completes the proof.
\end{proof}

%\begin{remark}
% A constant is missing somewhere. The above proof gives
%$ \hat \rho (\lambda) = \frac{1}{2 \pi |\mathcal D|}\int_{\varphi_2=0}^{2 \pi} \sqrt{1 - 2 \lambda \cos \varphi_2 + 
%\lambda^2} d\varphi_2  $ and
% Mathematica says that numerically 
%$$ \frac{\hat \rho (\lambda)}{\rho (\lambda)} = \frac{1}{ \sqrt 2} $$
%Probably I messed up a constant $ \sqrt 2$ somewhere. This needs to be fixed.
%\end{remark}

The following more technical version of Lemma \ref{lem1} 
also follows from the above proof.

\begin{lemma}
 \label{lem1b}
For any $\eta >0$ there is some $\xi_0$ such that for all $\xi < \xi_0$
there is some $\delta_0 = \delta_0(\eta, \xi)$ such that for all $\delta < \delta_0$
there is some $\varepsilon_0 = \varepsilon_0(\eta, \xi, \delta)$ such that for all $\varepsilon < \varepsilon_0$
\begin{itemize}
 \item[(a)] for all $q_1, \varphi_1$, $\mu \left( \mathcal A^{\xi}_{\lambda,\delta,\varepsilon}(q_1, \varphi_1) \right)
< (\rho(\lambda) + \eta) \delta \varepsilon$.
 \item[(b)] for all $q_1$ with $\dist( q_1, \partial \mathcal D )> 2 \xi$ and for all $\varphi_1$, 
$\mu \left( \mathcal A^{\xi}_{\lambda,\delta,\varepsilon}(q_1, \varphi_1) \right)
> (\rho(\lambda) - \eta) \delta \varepsilon$.
\end{itemize}
\end{lemma}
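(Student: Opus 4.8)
The plan is to read both statements off the computation already performed in the proof of Lemma~\ref{lem1}, this time recording its uniformity in $(q_1,\varphi_1)$ instead of integrating against $\mu\times\mu$. First observe that when $\dist(q_1,\partial\mathcal D)>2\xi$ and $\delta<\xi$ the first particle does not meet $\partial\mathcal D$ during $[0,\delta]$, so its trajectory is a single segment and $q_1,\varphi_1$ enter the picture only as a translation and a rotation; since these are symmetries of the local form of $\mu$, the asymptotics $\mu(\mathcal A^{\xi}_{\lambda,\delta,\varepsilon}(q_1,\varphi_1))\sim\frac{\varepsilon\delta}{2\pi|\mathcal D|}\int_{I_\xi}\sqrt{1-2\lambda\cos\psi+\lambda^2}\,d\psi$ obtained there hold uniformly in $(q_1,\varphi_1)$ (for (a) we only need the upper direction, and the main term is $\le\rho(\lambda)\delta\varepsilon$ because $\int_{I_\xi}\le\int_0^{2\pi}$; for (b) we keep the lower bound as well). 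Since $\rho$ is bounded away from zero and $\int_{I_\xi}\sqrt{1-2\lambda\cos\psi+\lambda^2}\,d\psi\to 2\pi|\mathcal D|\rho(\lambda)$ as $\xi\to0$, I would choose $\xi_0$ so that this integral exceeds $2\pi|\mathcal D|(\rho(\lambda)-\tfrac{\eta}{3})$ for $\xi<\xi_0$, then (given $\xi<\xi_0$) $\delta_0=\delta_0(\eta,\xi)<\min(\xi,\tau_{\min})$ small enough that the segment approximation holds and the $O(\delta^2)$ correction to the surface-area integrand is below $\tfrac{\eta}{3}$ of the main term, then $\varepsilon_0=\varepsilon_0(\eta,\xi,\delta)$ small enough that the $O(\varepsilon^2)$ tube-endcap correction is below $\tfrac{\eta}{3}\delta\varepsilon$. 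This settles (b) and settles (a) for all $q_1$ with $\dist(q_1,\partial\mathcal D)>2\xi$.

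What remains is the uniform upper bound in (a) for $q_1$ within $2\xi$ of $\partial\mathcal D$, where the first particle may collide during $[0,\delta]$ and $\gamma(s):=\Pi_q\Phi^s(q_1,\varphi_1)$ is no longer a segment. Here I would invoke finite horizon: consecutive collisions are separated by flow time $\ge\tau_{\min}>\delta$, so $\gamma$ has at most one corner on $[0,\delta]$. Along a good collision at a time $s$ one has $\dist(\gamma(s),\partial\mathcal D)>\xi$, hence the second particle is within $\xi-\varepsilon$ of $\partial\mathcal D$ at time $\lambda s$; travelling at speed $\le1$ it cannot have collided on $[0,\lambda s]$, so $\Pi_q\Phi^{\lambda s}(q_2,\varphi_2)=q_2+\lambda s(\cos\varphi_2,\sin\varphi_2)$. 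Consequently, writing $G=G(q_1,\varphi_1,\varphi_2)\subset[0,\delta]$ for the set of times meeting the distance and angle conditions --- a finite union of intervals of total length $\le\delta$, on each piece of which $\gamma$ is straight --- the $\varphi_2$-slice of $\mathcal A^{\xi}_{\lambda,\delta,\varepsilon}(q_1,\varphi_1)$ is exactly the $\varepsilon$-tube about the polygonal curve $s\mapsto\gamma(s)-\lambda s(\cos\varphi_2,\sin\varphi_2)$, $s\in G$, whose segments have speed $\sqrt{1-2\lambda\cos(\varphi_1(s)-\varphi_2)+\lambda^2}$ ($\varphi_1(s)$ the direction of $\gamma$), i.e. exactly the integrand of \eqref{eq:rho1}. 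Its area is, up to $O(\varepsilon^2)$, a universal constant times $\varepsilon$ times the length $\int_G\sqrt{1-2\lambda\cos(\varphi_1(s)-\varphi_2)+\lambda^2}\,ds$; integrating over $\varphi_2\in I_\xi$ (Fubini in $(s,\varphi_2)$), using $\int_{I_\xi}\sqrt{1-2\lambda\cos\psi+\lambda^2}\,d\psi\le\int_0^{2\pi}\sqrt{1-2\lambda\cos\psi+\lambda^2}\,d\psi=2\pi|\mathcal D|\rho(\lambda)$ pointwise in $s$ together with $\int_G ds\le\delta$, and dividing by $2\pi|\mathcal D|$, yields $\mu(\mathcal A^{\xi}_{\lambda,\delta,\varepsilon}(q_1,\varphi_1))\le\rho(\lambda)\delta\varepsilon+C(\mathcal D)\varepsilon^2$ for every $(q_1,\varphi_1)$; with the $\varepsilon_0$ above this is (a).

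I expect the bulk of the work to be bookkeeping: arranging the quantifiers in the order $\eta\to\xi\to\delta\to\varepsilon$ while keeping the truncation loss ($\int_{I_\xi}$ versus $\int_0^{2\pi}$), the $O(\delta^2)$ error in the surface-area integrand, and the $O(\varepsilon^2)$ endcap error each small against the right quantity. The only genuinely new ingredient beyond the proof of Lemma~\ref{lem1} is the finite-horizon observation above --- in time $\delta$ the first particle collides at most once and, along good collisions, the second particle not at all --- which confines a broken trajectory to a set of times of total length $\le\delta$ on which straight-line geometry and the crude bound $\int_{I_\xi}\sqrt{1-2\lambda\cos\psi+\lambda^2}\,d\psi\le 2\pi|\mathcal D|\rho(\lambda)$ can be applied segment by segment.
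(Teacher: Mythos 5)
Your proof is correct and is the natural unpacking of the paper's assertion that Lemma~\ref{lem1b} ``also follows from the above proof'': parts (b) and (a) with $\dist(q_1,\partial\mathcal D)>2\xi$ are indeed a uniform reading of the tube computation in Lemma~\ref{lem1} (using translation/rotation invariance of $dq\,d\varphi$), and your finite-horizon observation correctly supplies the only genuinely missing step, namely the uniform upper bound in (a) for $q_1$ within $2\xi$ of $\partial\mathcal D$, a case which the proof of Lemma~\ref{lem1} explicitly discards. One slip in wording: ``the second particle is within $\xi-\varepsilon$ of $\partial\mathcal D$'' should read ``the second particle is at distance more than $\xi-\varepsilon$ from $\partial\mathcal D$''; the backwards-in-time argument that follows makes the intended meaning clear, and it is that distance lower bound (combined with speed $\le 1$ and elapsed time $\le\lambda\delta$) that rules out a collision of the second particle.
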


%\begin{lemma}
%\label{lem2}
%For all $\lambda \in (0,1]$,
%$$\lim_{\xi \rightarrow 0}
%\lim_{\delta \rightarrow 0} \frac{1}{\delta} \lim_{\varepsilon \rightarrow 0} \frac{1}{\varepsilon}
%(\mu \times \mu) (\mathcal B^{\xi}_{\lambda,\delta,\varepsilon})
%= 0$$
%and
%$$\lim_{\delta \rightarrow 0}
% \frac{1}{\delta} \lim_{\varepsilon \rightarrow 0} \frac{1}{\varepsilon}
%(\mu \times \mu) (\mathcal A_{\lambda,\delta,\varepsilon})
%= \rho(\lambda)$$
%\end{lemma}

%\begin{proof}
%The first statement follows from the proof of Lemma \ref{lem1}. The second one follows from the first one,
%Lemma \ref{lem1} and the identity $\mathcal A_{\lambda,\delta,\varepsilon}
%= \mathcal A^{\xi}_{\lambda,\delta,\varepsilon} \cup \mathcal B^{\xi}_{\lambda,\delta,\varepsilon}$
%\end{proof}

The next lemma bounds the probability of short return and is of crucial importance.

\begin{lemma}
\label{lem3}
For $Leb_1$-a.e. $\lambda \in [0,1]$, 
\begin{equation*}
\lim_{\varepsilon \rightarrow 0} 
\frac{(\mu \times \mu) (A^{\xi}_{\lambda,\delta,\varepsilon} \cap 
(\Phi^{-\delta} \times \Phi^{-\lambda \delta}) A^{\xi}_{\lambda,\log^{100} \varepsilon ,\varepsilon})}
{\varepsilon^{1.99}} = 0
\end{equation*}
\end{lemma}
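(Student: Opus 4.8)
\emph{Proof strategy.} The plan is to split the event according to the time gap $g$ between the first and the second good collision: short gaps are destroyed by an elementary ``no reflection'' argument uniform in $\lambda$, while all longer gaps are destroyed by a geometric argument in which the second particle's speed $\lambda$ is the only stochastic ingredient, the a.e.\ statement coming out of an $L^1_\lambda$ bound by Borel--Cantelli. Unfolding the definitions, $(q_1,\varphi_1,q_2,\varphi_2)$ lies in the set in the numerator iff there is a good collision at some $s_1\in[0,\delta]$ and a good collision at some $t_2\in[\delta,\delta+\log^{100}\varepsilon]$; put $g=t_2-s_1\in[0,\delta+\log^{100}\varepsilon]$. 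Since the event grows only by a bounded inflation of its parameters as $\varepsilon$ runs over a dyadic block, it suffices to prove
\[
\int_0^1(\mu\times\mu)\bigl(A^{\xi}_{\lambda,\delta,\varepsilon}\cap(\Phi^{-\delta}\times\Phi^{-\lambda\delta})A^{\xi}_{\lambda,\log^{100}\varepsilon,\varepsilon}\bigr)\,d\lambda\ =\ o(\varepsilon^{1.99}),
\]
and then conclude by Borel--Cantelli along $\varepsilon=2^{-n}$. We bound the contributions of $g\le\delta'$ and $g>\delta'$ separately.

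\emph{Short gaps.} For $g\le\delta'$ the bound is uniform in $\lambda$. Since the first good collision keeps $\Pi_q\Phi^{s_1}(q_1,\varphi_1)$ at distance $>\xi\gg\delta'$ from $\partial\mathcal D$ (and, as $s_1\le\delta\ll\xi$, particle $1$ stays away from $\partial\mathcal D$ on all of $[0,s_1]$, and so does particle $2$, being $\varepsilon$-close), neither particle meets a scatterer on $[s_1,t_2]$, so $r(t):=\Pi_q\Phi^t(q_1,\varphi_1)-\Pi_q\Phi^{\lambda t}(q_2,\varphi_2)$ is affine there, with velocity of length $(1-2\lambda\cos\theta+\lambda^2)^{1/2}\ge\sin\theta\ge\sin\xi$, where $\theta\in[\xi,\pi-\xi]$ is the angle between the two velocities (the transversality built into a good collision). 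From $|r(s_1)|,|r(t_2)|\le\varepsilon$ we get $g\le 2\varepsilon/\sin\xi$, so $s_1\in[\delta-C\varepsilon/\xi,\delta]$; by Lemma~\ref{lem1b}(a), applied with a window of length $C\varepsilon/\xi$ in place of $\delta$ (its proof gives the same bound for any window), for every $(q_1,\varphi_1)$ the $\mu$-measure of the admissible $(q_2,\varphi_2)$ is $\le C'\varepsilon\cdot(\varepsilon/\xi)$. Hence this range of $g$ contributes $O(\varepsilon^2/\xi)=o(\varepsilon^{1.99})$.

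\emph{Long gaps.} Fix $(q_1,\varphi_1)$ and integrate in $(\lambda,q_2,\varphi_2)$. Exactly as in the proof of Lemma~\ref{lem1} (particle $1$, being within time $s_1\le\delta\ll\xi$ of the first collision, does not reflect on $[0,s_1]$, so the relevant map is the explicit one there), the set of $(q_2,\varphi_2)$ having a first good collision at time $s_1$ is covered, with bounded multiplicity and with Jacobian bounded above and below, by the chart $(s_1,\varphi_2,w)\in[0,\delta]\times I_\xi\times[-\varepsilon,\varepsilon]$ recording the collision time, the velocity direction $\varphi_2$ of the second particle at the collision, and the transverse position offset $w$; in the change of variables $(\lambda,s_1,\varphi_2,w)\mapsto(\lambda,q_2,\varphi_2)$ the $\lambda$-row of the Jacobian is trivial, so this bound survives. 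Hence the $g>\delta'$ contribution for fixed $(q_1,\varphi_1)$ is
\[
\lesssim\int_{[0,\delta]\times I_\xi\times[-\varepsilon,\varepsilon]}Leb_1\bigl\{\lambda\in[0,1]:\ \exists\,g\in(\delta',\delta+\log^{100}\varepsilon]\ \text{with a good collision at }t_2=s_1+g\bigr\}\,ds_1\,d\varphi_2\,dw,
\]
the collision now being between the two particles issuing from their fixed states $Y_1,Y_2$ at the first collision. Now $\sigma\mapsto\Pi_q\Phi^\sigma Y_i$ ($i=1,2$) are unit-speed billiard trajectories of length $\lesssim\log^{100}\varepsilon$, hence each a concatenation of $\lesssim\log^{100}\varepsilon$ straight segments by the finite horizon condition (this is the content of \eqref{complexity}); so the two trajectories lie within $\varepsilon$ of each other on $\lesssim\log^{200}\varepsilon$ regions of the $(g,\sigma)$-plane, and every region relevant to a good collision is, by the $\xi$-transversality of the two segments, contained in a disc of diameter $\lesssim\varepsilon/\xi$. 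Such a disc centred at $(g_0,\sigma_0)$ with $g_0>\delta'$ meets the level set $\{\sigma=\lambda g\}$ only for $\lambda$ in an interval of length $\lesssim(\varepsilon/\xi)/g_0\le(\varepsilon/\xi)/\delta'$ (using $\sigma_0=\lambda_0 g_0\le g_0$ and $\varepsilon/\xi\ll\delta'$). Summing over the $\lesssim\log^{200}\varepsilon$ regions, the inner $Leb_1$-set has measure $\lesssim\varepsilon\log^{200}\varepsilon/(\xi\delta')$, so, integrating this bound over $(q_1,\varphi_1)$,
\[
\int_0^1(\mu\times\mu)\bigl(\text{event},\ g>\delta'\bigr)\,d\lambda\ \lesssim\ \delta\cdot\varepsilon\cdot\frac{\varepsilon\log^{200}\varepsilon}{\xi\delta'}\ =\ \frac{\delta}{\xi\delta'}\,\varepsilon^2\log^{200}\varepsilon\ =\ o(\varepsilon^{1.99}),
\]
since $\delta,\delta',\xi$ are fixed and $\varepsilon^{0.01}\log^{200}\varepsilon\to0$. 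Together with the short-gap bound this yields the displayed $\lambda$-average estimate, hence the lemma.

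\emph{The main obstacle} is that the recollision cannot be destroyed for \emph{every} $\lambda$ --- for resonant $\lambda$ (say $\lambda=1$ or rational $\lambda$) it genuinely survives --- so the argument must be run as an estimate on $\int_0^1 d\lambda$ and the geometric gain has to be extracted from the single scalar parameter $\lambda$ via the ``leaf-spacing'' bound $(\varepsilon/\xi)/g_0$; this is also what forces two regimes, since for $g_0$ arbitrarily small the level sets $\{\sigma=\lambda g\}$ become too steep at the blob and one must retreat to the no-reflection argument. The remaining ingredients --- the uniform count $\lesssim\log^{200}\varepsilon$ of (uniformly transversal) near-intersections, which is exactly where \eqref{complexity} and the $\xi$-truncation in the definition of $A^{\xi}$ are used; the bounded-Jacobian chart of Lemma~\ref{lem1} and its harmless coupling with $d\lambda$; and the passage from the $L^1_\lambda$ bound to the a.e.\ statement by Borel--Cantelli --- are routine, the first of them being the technical heart of the matter.
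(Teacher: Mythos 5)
Your proposal is correct and follows essentially the same strategy as the paper: establish an $L^1_\lambda$ bound of order $\varepsilon^2\log^{200}\varepsilon$ by counting the $\lesssim\log^{200}\varepsilon$ $\xi$-transversal near-intersections of the two billiard trajectories (each contributing a $\lambda$-interval of length $\lesssim\varepsilon$), and then apply Markov's inequality and Borel--Cantelli along dyadic $\varepsilon$. Your short-gap/long-gap dichotomy with the no-reflection argument is a reformulation of the paper's observation that consecutive good collisions are necessarily $\xi$-separated (this is how the paper passes from the union over $k$ to the single term with $\Phi^{-\xi}$ in \eqref{eq:return1}), and the chart-based integration over $(s_1,\varphi_2,w)$ replaces the paper's direct pointwise bound \eqref{eq:keylambda} on $Leb_1(\lambda)$ for configurations with $\|q_1-q_2\|<3\varepsilon$.
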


\begin{proof}
%Lemma \ref{lem1} implies that there is some $c_{\delta}$ which satisfies 
%$ (\mu \times \mu) (A^{\xi}_{\lambda,\delta,\varepsilon}) > c_{\delta} \varepsilon$. Hence it is enough
%to prove the lemma when the denominator is replaced by $\varepsilon$.
First observe that
$$
A^{\xi}_{\lambda,\delta,\varepsilon} \cap 
(\Phi^{-\delta} \times \Phi^{-\lambda \delta}) A^{\xi}_{\lambda,\log^{100} \varepsilon ,\varepsilon}
\subset
\cup_{k=0}^{\delta/\varepsilon}
(\Phi^{-k \varepsilon} \times \Phi^{-k \lambda \varepsilon}) 
(A^{\xi}_{\lambda,\varepsilon,\varepsilon} \cap 
(\Phi^{-\delta + k \varepsilon} \times \Phi^{\lambda (-\delta + k \varepsilon)}) 
A^{\xi}_{\lambda,\log^{100} \varepsilon ,\varepsilon})
$$
Using this, the invariance of $\mu$ and the fact that two "good collisions"
(in the sense defined after $\mathcal A^{\xi}_{\lambda,t,\varepsilon}$) 
are necessarily separated by $\xi$ we conclude
\begin{eqnarray}
&&(\mu \times \mu) (A^{\xi}_{\lambda,\delta,\varepsilon} \cap 
(\Phi^{-\delta} \times \Phi^{-\lambda \delta}) A^{\xi}_{\lambda,\log^{100} \varepsilon ,\varepsilon}) 
\nonumber \\
&\leq& \frac{\delta}{\varepsilon} 
( \mu \times \mu )
(A^{\xi}_{\lambda,\varepsilon,\varepsilon} \cap 
(\Phi^{-\xi} \times \Phi^{-\lambda \xi})
A^{\xi}_{\lambda,\log^{100} \varepsilon ,\varepsilon} ) \nonumber \\
& \leq & 
\frac{\delta}{\varepsilon} 
( \mu \times \mu )
(
\{ \| q_1- q_2\| < 3 \varepsilon\}
\cap 
(\Phi^{-\xi} \times \Phi^{-\lambda \xi})
A^{\xi}_{\lambda,\log^{100} \varepsilon ,\varepsilon} ).
 \label{eq:return1}
\end{eqnarray}

Next we prove that for arbitrary $q_1, \varphi_1, q_2, \varphi_2$ fixed,
\begin{equation}
\label{eq:keylambda}
  Leb_1 (\lambda: (q_1, \varphi_1, q_2, \varphi_2) \in (\Phi^{-\xi} \times \Phi^{-\lambda \xi}) 
  A^{\xi}_{\lambda, \log^{100} \varepsilon ,\varepsilon}) <
C_{\xi} \varepsilon \log^{200} \varepsilon.
\end{equation}
Since, by the definition of a good collision, the particles come to a close encounter with
transversal velocities, their paths should intersect near the time of a close encounter.
We note that as the free flight is bounded from below, 
the trajectories in the configuration space
$\Pi_q \{ \Phi^t (q_i, \varphi_i) \}_{t \in [\xi,\log^{100} \varepsilon]}$
can have at most $\log^{200} \varepsilon$ "good" intersections (where good means that their angle is at least $\xi$
and their distance from the boundary is at least $\xi /2$). 
Let us denote the time instants when the first particle arrives at these intersections by 
$t_i, i< \log^{200} \varepsilon  $. Now a simple geometry shows that 
$(q_1, \varphi_1, q_2, \varphi_2) \in (\Phi^{-\xi} \times \Phi^{-\lambda \xi})
\mathcal A^{\xi}_{\lambda,\log^{100} \varepsilon ,\varepsilon} $
implies
\begin{equation}
\label{keylambda2}
\| \Pi_q \Phi^{t_k} (q_1, \varphi_1) - \Pi_q \Phi^{\lambda t_k} (q_2, \varphi_2) \| < \frac{2 \varepsilon}{ \sin \xi} 
\quad \text{for some $k < \log^{200} \varepsilon$.}
\end{equation}
Now the set of $\lambda$'s satisfying (\ref{keylambda2}) for a fixed $k$ is an interval whose 
length is bounded by $\frac{2 \varepsilon}{t_k \sin \xi} < C_{\xi} \varepsilon$. (\ref{eq:keylambda}) follows.

%Next observe that $(q_1, \varphi_1, q_2, \varphi_2) \in \mathcal A^{\xi}_{\lambda,\varepsilon,\varepsilon}$
%implies $\| q_1 - q_2\| \leq 3 \epsilon$ which in turn impies 
%$$( \mu \times \mu )
%(\mathcal A^{\xi}_{\lambda,\varepsilon,\varepsilon}) < C \varepsilon ^2.$$

Combining (\ref{eq:return1}) and (\ref{eq:keylambda}) we obtain
\begin{eqnarray*}
&&
\int_{\lambda} {(\mu \times \mu) (\mathcal A^{\xi}_{\lambda,\delta,\varepsilon} \cap 
(\Phi^{-\delta} \times \Phi^{-\lambda \delta}) \mathcal A^{\xi}_{\lambda,\log^{100} \varepsilon ,\varepsilon})}
d Leb_1(\lambda) \\
&\leq& \frac{\delta}{\varepsilon} \int_{\lambda} 
\int_{\{ \| q_1- q_2\| < 3 \varepsilon\}, \varphi_1, \varphi_2} 1_{(\Phi^{-\xi} \times \Phi^{-\lambda \xi})
\mathcal A^{\xi}_{\lambda,\log^{100} \varepsilon ,\varepsilon} } d (\mu \times \mu) dLeb_1( \lambda)\\
&\leq&C_{\xi, \delta} \varepsilon^2 \log^{200} \varepsilon.
\end{eqnarray*}

The Markov inequality gives
\begin{equation}
Leb_1 \{ \lambda: 
(\mu \times \mu) (\mathcal A^{\xi}_{\lambda,\delta,\varepsilon} \cap 
(\Phi^{-\delta} \times \Phi^{-\lambda \delta}) \mathcal A^{\xi}_{\lambda,\log^{100} \varepsilon ,\varepsilon}) 
> {\varepsilon^{1.995}}
\} < {C_{\xi, \delta} \varepsilon^{0.005} \log^{200} \varepsilon} \label{eq:keylemmaMarkov}
\end{equation}
Clearly, (\ref{eq:keylemmaMarkov}) holds with $\log^{100} \varepsilon $ replaced by 
$\log^{100} (2 \varepsilon)$ (possibly with some new constant $C_{\xi, \delta} $). Thus we have for all 
positive integer $l$,
\begin{eqnarray*}
&&Leb_1 \{ \lambda: \exists \varepsilon \in [2^{-l}, 2^{-l-1}];
(\mu \times \mu) (\mathcal A^{\xi}_{\lambda,\delta,\varepsilon} \cap 
(\Phi^{-\delta} \times \Phi^{-\lambda \delta}) \mathcal A^{\xi}_{\lambda,\log^{100} \varepsilon ,\varepsilon}) 
>{\varepsilon^{1.995}} \} \nonumber \\
&& < {C'_{\xi, \delta} 2^{-0.005 l} l^{200}} \label{eq:keylemmaMarkov2}
\end{eqnarray*}
Since this is summable in $l$, the Borel Cantelli lemma completes the proof.
\end{proof}

%%%%%%%%%%%%%%%%%%%%%%%%%%%%%%%%%%%%%%%%

\subsection{Macroscopic time}
Now we turn to macroscopic time.
\begin{lemma}
\label{lemma:delta'}
For $Leb_1$-a.e. $\lambda \in [0,1]$,
$$ 
\lim_{\xi \rightarrow 0}
\lim_{\delta' \rightarrow 0} \frac{1}{\delta'}
 \lim_{\varepsilon \rightarrow 0}
%\frac{1}{\varepsilon} 
(\mu \times \mu)
 \mathcal A^{\xi}_{\lambda, \frac{\delta'}{\varepsilon}, \varepsilon} = \rho(\lambda)
$$
\end{lemma}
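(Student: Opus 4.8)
The plan is to establish that on the mesoscopic scale $\delta'/\eps$ the collision probability adds up to $\rho(\lambda)\delta'(1+o(1))$, by chopping the time window $[0,\delta'/\eps]$ into $\delta'/\delta$ consecutive windows of macroscopic-free length $\delta/\eps$, applying Lemma~\ref{lem1b} on each to get the single-window probability $(\rho(\lambda)\pm\eta)\delta\eps$, and controlling the overlaps (double encounters) via Lemma~\ref{lem3}. More precisely, I would first introduce a small buffer at the end of each length-$\delta$ block so that the blocks are genuinely separated by a gap of order $\xi$ in flow time; the contribution of encounters landing in a buffer is $O((\text{buffer}/\delta)\cdot\delta')\to 0$ after $\delta'\to 0$ is preceded by the buffer$\to 0$ limit, so this is harmless. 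Writing $\mathbf 1_{\mathcal A^\xi_{\lambda,\delta'/\eps,\eps}}\le \sum_{k} \mathbf 1_{B_k}$ where $B_k=(\Phi^{-k\delta}\times\Phi^{-\lambda k\delta})\mathcal A^\xi_{\lambda,\delta,\eps}$ is the event of a good collision during the $k$-th block, and using invariance of $\mu\times\mu$, the union bound together with Lemma~\ref{lem1b}(a) gives the upper bound $(\mu\times\mu)(\mathcal A^\xi_{\lambda,\delta'/\eps,\eps})\le \frac{\delta'}{\delta}(\rho(\lambda)+\eta)\delta\eps/\eps=(\rho(\lambda)+\eta)\delta'$ after dividing by $\eps$; taking $\eps\to0$, then $\delta'\to 0$, then $\xi\to 0$ (with $\eta$ shrinking suitably) yields $\limsup\le\rho(\lambda)$.

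For the matching lower bound I would use inclusion–exclusion: $(\mu\times\mu)(\mathcal A^\xi_{\lambda,\delta'/\eps,\eps})\ge \sum_k (\mu\times\mu)(B_k)-\sum_{k<k'}(\mu\times\mu)(B_k\cap B_{k'})$. The first sum, by Lemma~\ref{lem1b}(b) (the boundary restriction $\dist(q_1,\partial\mathcal D)>2\xi$ costs only $O(\xi)$ in $\mu$-measure, absorbed into $\eta$ after $\xi\to0$), is at least $\frac{\delta'}{\delta}(\rho(\lambda)-\eta)\delta\eps=(\rho(\lambda)-\eta)\delta'\eps$. The pairwise-intersection sum must be shown to be $o(\eps)$ after dividing by $\eps$ and sending $\eps\to0$. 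I split it by the gap $k'-k$: for $(k'-k)\delta\le \log^{100}\eps$ (close recollisions) the term $B_k\cap B_{k'}$ is contained in a translate of $\mathcal A^\xi_{\lambda,\delta,\eps}\cap(\Phi^{-\delta}\times\Phi^{-\lambda\delta})\mathcal A^\xi_{\lambda,\log^{100}\eps,\eps}$, so Lemma~\ref{lem3} bounds the whole close-recollision contribution by $(\delta'/\eps)\cdot O(\eps^{1.99})=o(\eps)$ for a.e.\ $\lambda$. For $(k'-k)\delta> \log^{100}\eps$ (separated recollisions) I would invoke mixing, i.e.\ Theorem~\ref{thm:DEC}, applied after smoothing the indicator of a good collision to a generalized-Hölder observable: the correlation between the events in blocks $k$ and $k'$ decays like $e^{-a\sqrt{(k'-k)\delta}}\le e^{-a\log^{50}\eps}$, which is smaller than any power of $\eps$, so summing over all $O((\delta'/\eps)^2)$ such pairs still gives $o(\eps)$. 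Combining, $\liminf\ge\rho(\lambda)$, matching the upper bound.

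The main obstacle, as in the authors' idea-of-proof, will be the recollision estimate for separated times, because Theorem~\ref{thm:DEC} is stated for a single particle's dynamics and a mean-zero observable on $\Omega$, whereas here I need decay of correlations for the two-particle system with $q_1,\varphi_1$ still held fixed. The resolution is to condition on the first particle's trajectory: for fixed generic $(q_1,\varphi_1)$ the event "good collision during block $k$" becomes a subset of $\Omega$ for the second particle (the set $\mathcal A^\xi_{\lambda,\delta,\eps}(q_1,\varphi_1)$ transported by the flow), and the near-independence of blocks $k$ and $k'$ is then exactly a correlation bound for $\Phi^{\lambda(k'-k)\delta}$ on the second particle's phase space, to which Theorem~\ref{thm:DEC} applies after the chopping-and-smoothing procedure described there (pieces of length $\le\eps/C_f$, cost $\mathrm{var}_\alpha$ of order a power of $\eps$, still beaten by $e^{-a\log^{50}\eps}$). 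One must also check uniformity: the fixed trajectory of particle~$1$ enters only through the location/shape of the target set, whose $\mu$-measure is $O(\delta\eps)$ uniformly by Lemma~\ref{lem1b}(a), so the product of (measure)$^2$-type bound plus the exponentially small correlation correction survives the double sum. The remaining steps — the buffer-zone bookkeeping, the order of limits $\eps\to0$, $\delta\to0$, $\delta'\to0$, $\xi\to0$ with $\eta\to0$, and the almost-everywhere-in-$\lambda$ caveat inherited verbatim from Lemma~\ref{lem3} — are routine once these two estimates are in place.
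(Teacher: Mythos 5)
Your proposal follows the same route as the paper: decompose $[0,\delta'/\eps]$ into blocks of flow length $\delta$, apply Lemma~\ref{lem1b}(a) for the upper (union) bound and Lemma~\ref{lem1b}(b) plus inclusion--exclusion for the lower bound, handle close recollisions (gap $\le\log^{100}\eps$) by Lemma~\ref{lem3}, and handle separated recollisions by conditioning on the trajectory of particle~$1$ so that Theorem~\ref{thm:DEC} becomes a one-particle correlation bound for particle~$2$.

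One correction to the bookkeeping in the separated-recollision estimate: the correlation bound gives $(\mu\times\mu)(B_k\cap B_{k'})\le C\big[\mu(B_k)\mu(B_{k'})+e^{-a\sqrt{(k'-k)\lambda\delta}}\big]$ (up to the integration over $q_1,\varphi_1$), and after summing over the $\sim(\delta'/(\eps\delta))^2$ pairs the \emph{product} term contributes $O(\delta'^2)$, not $o(\eps)$. This term does not vanish as $\eps\to 0$ for fixed $\delta'$; it is the subsequent $\lim_{\delta'\to0}\frac1{\delta'}$ that removes it. Only the exponential-correction part is super-polynomially small in $\eps$. Your earlier spurious $\eps$ in ``$\sum_k(\mu\times\mu)(B_k)\ge(\rho-\eta)\delta'\eps$'' (it should be $(\rho-\eta)\delta'$, since there are $\delta'/(\eps\delta)$ blocks, not $\delta'/\delta$) is probably what made the $o(\eps)$ claim look plausible. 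Two other remarks: the buffer zones you insert at the ends of the length-$\delta$ blocks are unnecessary, since Lemma~\ref{lem3} already controls all overlaps including adjacent ones; and no smoothing of the indicator is needed, because the set $\mathcal A^{\xi}_{\lambda,\delta,\eps}(q_1,\varphi_1)$ is an $\eps$-tube around a surface of area $O(\delta)$, so its indicator is already generalized Lipschitz with $\mathrm{var}_1$ uniformly bounded (moreover, since particle~2 starts from the absolutely continuous measure $\mu$ here, the plain Theorem~1.1 of Chernov suffices and the standard-pair version with the $|W|^{-1}$ factor and the chopping argument is not even invoked at this stage).
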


\begin{proof}
Let us write
$$ \mathcal A^{\xi}_{\lambda, \frac{\delta'}{\varepsilon}, \varepsilon} 
= \cup_{k=1}^{\frac{\delta'}{\varepsilon \delta} -1} \mathcal C_k,$$
where $\mathcal C_k = (\Phi^{-k \delta} \times \Phi^{-\lambda k \delta}) \mathcal  A^{\xi}_{\lambda,\delta,\varepsilon}.$
Then the bound
$ (\mu \times \mu)
 \mathcal A^{\xi}_{\lambda, \frac{\delta'}{\varepsilon}, \varepsilon} 
   \leq \frac{\delta'}{\varepsilon \delta}   (\mu \times \mu)
 \mathcal C_0$
and Lemma \ref{lem1} give
$$ 
\lim_{\xi \rightarrow 0}
 \lim_{\varepsilon \rightarrow 0}
(\mu \times \mu)
 \mathcal A^{\xi}_{\lambda, \frac{\delta'}{\varepsilon}, \varepsilon} 
 \leq \delta'  \rho (\lambda),
$$
whence the upper bound follows.

To derive the lower bound, we write
$$ (\mu \times \mu)
 \mathcal A^{\xi}_{\lambda, \frac{\delta'}{\varepsilon}, \varepsilon} 
   \geq \frac{\delta'}{\varepsilon \delta}   (\mu \times \mu)
 \mathcal C_0 - \sum_{0 \leq k_1 < k_2 < \frac{\delta' }{\varepsilon \delta} -1}
 (\mu \times \mu) ( \mathcal C_{k_1} \cap \mathcal C_{k_2})$$
 An analogous argument to the upper bound will prove
 the lower bound once we establish
 \begin{equation}
 \label{eq:lem:delta'lowerbd}
 \lim_{\xi \rightarrow 0}
 \lim_{\delta' \rightarrow 0} \frac{1}{\delta'}
 \lim_{\delta \rightarrow 0}
 \lim_{\varepsilon \rightarrow 0}
 \sum_{0 \leq k_1 < k_2 < \frac{\delta' }{\varepsilon \delta}-1}
(\mu \times \mu)
( \mathcal C_{k_1} \cap \mathcal C_{k_2}) = 0.
 \end{equation}
First, using the invariance of $\mu$ we have
\begin{equation}
 \label{eq:lem:delta'lowerbd2}
\sum_{0 \leq k_1 < k_2 < \frac{\delta' }{\varepsilon \delta}-1}
(\mu \times \mu)
( \mathcal C_{k_1} \cap \mathcal C_{k_2})
\leq 
\frac{\delta' }{\varepsilon \delta}
\sum_{1 \leq k \leq \frac{\delta' }{\varepsilon \delta}}
(\mu \times \mu)
( \mathcal C_{0} \cap \mathcal C_{k}).
\end{equation}
The short returns are guaranteed to have small contribution by Lemma \ref{lem3}:
\begin{equation}
 \label{eq:lem:delta'lowerbd3}
\lim_{\varepsilon \rightarrow 0} \frac{1}{\varepsilon }
\sum_{1 \leq k \leq (\log^{100} \varepsilon)/\delta}
(\mu \times \mu)
( \mathcal C_{0} \cap \mathcal C_{k}) = 0.
\end{equation}

To estimate the contribution of large $k$'s, we use Theorem \ref{thm:DEC}
(actually at this point a weaker version of that theorem, namely Theorem 1.1 of \cite{Ch07}
is enough as the initial measure is absolutely continuous.)
Specifically, we fix the trajectory
of the first particle (that is, we fix $q_1, \varphi_1$) and for a fixed $k$ we 
choose $F = F(k)$ to be the indicator of the set $A= A(k)$,
where 
$$
A = \mathcal A_{\lambda, \delta, \varepsilon}^{\xi} ( \Phi^{k \delta}(q_1,\varphi_1)) 
= \{ (q_2,\varphi_2) : (\Phi^{k \delta}(q_1,\varphi_1),q_2,\varphi_2) \in  A_{\lambda, \delta, \varepsilon}^{\xi}\}
$$
(recall the notation (\ref{eq:Afixedq1v1v0})). That is,
$A$ is such that there is a good collision in the time interval
$[k\delta, (k+1) \delta]$ if $\Phi^{\lambda k \delta} (q_2, \varphi_2) \in A$. Clearly, $A$ is the 
$\varepsilon$ neighborhood of a two dimensional surface of area
$O(\delta)$ (see the proof of Lemma \ref{lem1}).
Thus in particular, $F$ is generalized Lipschitz (generalized
H\"older with exponent $1$) with uniformly bounded norm.
%$var_1(F) < C \delta$ with some $C$ independently of $\varepsilon$.
Now we apply Theorem \ref{thm:DEC} with $\alpha = 1$ to conclude
that
\begin{eqnarray}
&& (\mu \times \mu)
( \mathcal C_{0} \cap \mathcal C_{k}) \nonumber \\
&=& \int 
\mu \left( (q_2, \varphi_2) \in \mathcal A_{\lambda, \delta, \varepsilon}^{\xi}  (q_1,\varphi_1),
\Phi^{\lambda k \delta} (q_2, \varphi_2) \in \mathcal A_{\lambda, \delta, \varepsilon}^{\xi} 
( \Phi^{k \delta}(q_1,\varphi_1)) \right)  d \mu (q_1, \varphi_1) \nonumber \\
&\leq & C \int \left[
\mu \left( \mathcal A_{\lambda, \delta, \varepsilon}^{\xi}  (q_1,\varphi_1) \right)
\mu \left( \mathcal A_{\lambda, \delta, \varepsilon}^{\xi} 
( \Phi^{k \delta}(q_1,\varphi_1)) \right)
+  e^{-a \sqrt{ \lambda k \delta}} \right]
d \mu (q_1, \varphi_1). \nonumber 
\end{eqnarray}
Now using Lemma \ref{lem1b} (a)
to bound
$\mu \left( \mathcal A_{\lambda, \delta, \varepsilon}^{\xi}  (q_1,\varphi_1) \right) $
and $\mu \left( \mathcal A_{\lambda, \delta, \varepsilon}^{\xi}  ( \Phi^{k \delta}(q_1,\varphi_1)) \right)$, we conclude
$$
\sum_{ \frac{\log^{100} \varepsilon}{\delta} \leq k < \frac{\delta' }{\varepsilon \delta}}
(\mu \times \mu)
( \mathcal C_{0} \cap \mathcal C_{k}) < C
\sum_{ \frac{\log^{100} \varepsilon}{\delta} \leq k < \frac{\delta' }{\varepsilon \delta}}
[ \varepsilon^2 \delta^2 +  e^{-a\sqrt{ \lambda k \delta}}].
$$
This estimate, combined with (\ref{eq:lem:delta'lowerbd2}) and
(\ref{eq:lem:delta'lowerbd3}) yields
(\ref{eq:lem:delta'lowerbd}). We have finished the proof
of Lemma \ref{lemma:delta'}.
\end{proof}

%%%%%%%%%%%%%%%%%%%%%%%%%%%%%%%%%%%%%%%%%

\subsection{Proof of Theorem \ref{thm1}}

Now we want to prove a version of Lemma \ref{lemma:delta'} for a later macroscopic time interval,
conditioned on the event that there has been no good collision before. As the main difficulty is the 
lack of independence, we apply the big block small block technique to gain approximate independence 
among big blocks. The big block size will be $M   = \delta' / \varepsilon$ and the small block size is
$m = \log ^{100} \varepsilon$. 
The $n$th big block is the time interval $[m_{n-1}, M_n]$ and the $n$th small block is $[M_n,m_n]$, where
$M_n = Mn + m(n-1)$ and $m_n = (M+m)n$. 
Let us write
$$E_n = \{ \text{ there is good collision in the $n$th big block }\} \text{   and    } D_n = \cup_{N=1}^n E_N$$
Our main proposition is
\begin{proposition}
\label{lem:Markovdec}
\begin{equation} 
\label{eq:Markovdec}
\lim_{\xi \rightarrow 0}
\lim_{\delta' \rightarrow 0} \frac{1}{\delta'}
 \lim_{\varepsilon \rightarrow 0}
%\frac{1}{\varepsilon} 
(\mu \times \mu)
 (E_{n+1} | \overline{D_n})
 = \rho(\lambda)
 \end{equation}
uniformly for $n < T/\delta'$.
\end{proposition}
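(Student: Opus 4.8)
The plan is to prove approximate independence of the big block $E_{n+1}$ from the past $\overline{D_n}$ by using the small block of length $m=\log^{100}\eps$ as a mixing buffer, and then to identify the conditional probability of $E_{n+1}$ with the unconditional one from Lemma~\ref{lemma:delta'}. Write $(\mu\times\mu)(E_{n+1}\mid\overline{D_n})=(\mu\times\mu)(E_{n+1}\cap\overline{D_n})/(\mu\times\mu)(\overline{D_n})$; it suffices to show
\[
(\mu\times\mu)(E_{n+1}\cap\overline{D_n})=\rho(\lambda)\,\delta'(1+o(1))\,(\mu\times\mu)(\overline{D_n})
\]
uniformly for $n<T/\delta'$ in the iterated limit $\eps\to0$, $\delta'\to0$, $\xi\to0$, together with $(\mu\times\mu)(\overline{D_n})\ge c(T)>0$, the latter established below by the same induction. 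I would fix $(q_1,\varphi_1)$ and work with the second particle; then $\overline{D_n}(q_1,\varphi_1),E_{n+1}(q_1,\varphi_1)\subset\Omega$, and $1_{E_{n+1}(q_1,\varphi_1)}=1_{\tilde E}\circ\Phi^{\lambda m_n}$ with $\tilde E=\mathcal A^\xi_{\lambda,\delta'/\eps,\eps}(\Phi^{m_n}(q_1,\varphi_1))$. By Lemma~\ref{lem1b}(a), decomposing $[0,\delta'/\eps]$ into $\delta$-windows, $\mu(\tilde E)\le(\rho(\lambda)+\eta)\delta'$ for every $(q_1,\varphi_1)$; combined with $\int\mu(\tilde E)\,d\mu=(\mu\times\mu)\mathcal A^\xi_{\lambda,\delta'/\eps,\eps}=\rho(\lambda)\delta'(1+o(1))$ (Lemma~\ref{lemma:delta'} and invariance of $\mu$), this forces $\mu(\tilde E)=\rho(\lambda)\delta'(1+o(1))$ off a set of $(q_1,\varphi_1)$ of $\mu$-measure $o(1)$.

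\textbf{The conditional family is standard with bounded $\mathcal Z$.} The key claim I would prove by induction on $n$ is: the measure $\mathcal G_n(q_1,\varphi_1):=(\Phi^{\lambda m_n})_*\big(\mu(\,\cdot\mid\overline{D_n}(q_1,\varphi_1))\big)$ — the survivor measure for the second particle, evolved to the start of the $(n{+}1)$st big block — is a standard family (for $\Phi^{\mathfrak s}$) whose $\mathcal Z$-function is bounded by a universal constant, uniformly in $n<T/\delta'$, in $(q_1,\varphi_1)$ off the exceptional set, and in $\eps,\delta',\xi$. For $n=0$ this is finiteness of $\mathcal Z_\mu$. For the inductive step, one evolves $\mathcal G_{n-1}$ forward through the $n$th big block, imposing the constraint "no good collision'' $\delta$-window by $\delta$-window: each such imposition deletes an $\eps$-neighborhood of a surface of bounded geometry (a ``hole'' of bounded complexity per step, cf.\ the computation in Lemma~\ref{lem1}), and this complexity is absorbed by the hyperbolicity in the same way the singularity complexity \eqref{eq:Nstepexp} is, so that a growth lemma for the dynamics with $\eps$-holes — an extension of Lemma~\ref{lem:gr} along the lines indicated there — keeps $\mathcal Z=O(1)$ across the block; crossing the buffer only improves $\mathcal Z$. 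This is the step where the size of the big block ($M=\delta'/\eps$, polynomially long in $1/\eps$, so that full dynamics runs between consecutive buffers) is essential.

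\textbf{Conclusion.} Granting the claim,
\[
(\mu\times\mu)(E_{n+1}\cap\overline{D_n})=\int\mu\big(\overline{D_n}(q_1,\varphi_1)\big)\,\mathbb E_{\mathcal G_n(q_1,\varphi_1)}\!\big(1_{\tilde E}\big)\,d\mu(q_1,\varphi_1).
\]
Since $\mathcal G_n$ lives at time $\lambda m_n$ and $\tilde E$ is a ``forward'' set over the next $M$ units, Theorem~\ref{thm:DEC} applied to $F=1_{\tilde E}-\mu(\tilde E)$ with $t=\lambda m$ gives $\mathbb E_{\mathcal G_n}(1_{\tilde E})=\mu(\tilde E)+o(\delta')$ uniformly: the $|W|^{-1}$ factor is handled by cutting the curves of $\mathcal G_n$ at a fixed length (the removed short curves have total weight $\le C\mathcal Z_{\mathcal G_n}\cdot(\text{that length})$), and $\mathrm{var}_1(1_{\tilde E})=O(\delta'/\eps)$ is crushed by $e^{-a\sqrt{\lambda m}}=e^{-a\sqrt{\lambda}\log^{50}(1/\eps)}$, which beats every power of $1/\eps$. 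Using $\mu(\tilde E)=\rho(\lambda)\delta'(1+o(1))$ off the exceptional set and $\mu(\tilde E)\le(\rho(\lambda)+\eta)\delta'$ elsewhere to bound the exceptional contribution, one gets $\int\mu(\overline{D_n})\mu(\tilde E)\,d\mu=\rho(\lambda)\delta'(1+o(1))\,(\mu\times\mu)(\overline{D_n})$, hence the displayed identity of the first paragraph. The lower bound $(\mu\times\mu)(\overline{D_n})\ge c(T)$ comes from the same induction: $(\mu\times\mu)(\overline{D_k})=(\mu\times\mu)(\overline{D_{k-1}})\big(1-\rho(\lambda)\delta'(1+o(1))\big)$ for $k<n$ by the already-proved cases, so $(\mu\times\mu)(\overline{D_n})\ge(1-2\rho(\lambda)\delta')^{T/\delta'}\ge\tfrac12 e^{-2\rho(\lambda)T}$ once $\delta',\eps$ are small, the base case $n=0$ being Lemma~\ref{lemma:delta'}. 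Dividing yields \eqref{eq:Markovdec}.

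\textbf{Main obstacle.} I expect the crux to be the growth lemma with $\eps$-holes underlying the claim in the second paragraph: one must verify that the bounded complexity injected by each ``good-collision'' constraint is dominated by the expansion, so that conditioning on survival through an entire macroscopic stretch of time does not spoil the distortion/growth estimates — this is exactly where the relative sizes of the big and small blocks and the $N$-step expansion \eqref{eq:Nstepexp} are used. Secondary bookkeeping — the short-return bound (Lemma~\ref{lem3}), the smallness of overlaps of $\delta$-windows (as in the proof of Lemma~\ref{lemma:delta'}), and equidistribution of the first particle's orbit over the big block — is needed to pass from the fixed-$(q_1,\varphi_1)$ statement to the statement about $\mu\times\mu$ and to reduce the remaining computation to Lemma~\ref{lemma:delta'}.
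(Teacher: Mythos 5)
Your skeleton — decompose the conditional survivor measure after the $m$-length buffer into a standard family, hit it with Theorem~\ref{thm:DEC}, then pin down $\mu(\tilde E)$ by the Markov/Chebyshev trick from the a.s.\ upper bound (Lemma~\ref{lem1b}(a)) together with the average identity from Lemma~\ref{lemma:delta'} — is close in spirit to the paper, and the Markov trick would actually be a cleaner route to the lower bound than the paper's inclusion--exclusion plus second Markov decomposition. But the load-bearing claim, that the conditional family $\mathcal G_n$ has $\mathcal Z=O(1)$ uniformly (off a small set) by an ``extension of Lemma~\ref{lem:gr} to dynamics with $\eps$-holes in which the holes' complexity is absorbed by hyperbolicity,'' is a genuine gap, and the paper explicitly flags it: right after \eqref{eq:conditionalgrowth} they write that the growth lemma ``does not directly imply'' the required estimate because the curves have been cut by $\partial\mathcal A^\xi_{\lambda,\delta,\eps}$, a cutting that depends on $(q_1,\varphi_1)$ and is in no sense aligned with the invariant cone field. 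The holes are moving (one per $\delta$-window, $\sim\delta'/(\eps\delta)$ of them per big block), their boundaries are not singularity curves of the map, and the last cut in the $n$th big block has had essentially no time to regrow by time $M_n$ — so at the end of the big block short fragments can carry non-negligible mass. No ``growth lemma with holes'' in the paper's toolbox covers this.

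The paper instead proves a much weaker statement that is still sufficient, and by a different mechanism: Lemma~\ref{lemma:Gn'} controls only the weight of curves shorter than the single threshold $\eps^2$ (up to a factor $C/\delta'^2$, not a uniform $\mathcal Z$-bound), and only for $(q_1,\varphi_1)$ in a good set $G_n'$ of measure $1-O(\delta'^2)$. The proof is measure-theoretic rather than dynamical: the Markov decomposition is taken at $\tau_n=M_n+m/2$, so any piece $W_{\alpha,n}$ that was fragmented by a collision constraint has been flowed forward for at least $m/2=\tfrac12\log^{100}\eps$ since the cut; pulling it back to the cut time therefore yields a curve of superpolynomially small length, which consequently sits entirely inside the $\eps^{20}$-neighborhood $\mathcal B$ of $\partial\mathcal A$. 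Since $(\mu\times\mu)\mathcal B<\eps^{18}$ by a crude version of Lemma~\ref{lem1}, Fubini gives a good set of $(q_1,\varphi_1)$ on which $\mu(\mathcal B(q_1,\varphi_1))<\eps^{17}$, and hence the conditional weight of cut curves is $<\eps^{16}$; the uncut curves are genuine images under the growth lemma and obey \eqref{EqGrowth}. If you want to save your version, this buffer-plus-$\mathcal B$-plus-Fubini argument is exactly what you would need to substitute for the unproved growth-lemma-with-holes claim; the rest of your scheme (in particular the Chebyshev identification $\mu(\tilde E)=\rho(\lambda)\delta'(1+o(1))$ a.e.) then goes through and gives a somewhat shorter route to \eqref{eq:lemmaMarkovsteps23} than the paper's two separate upper/lower-bound lemmas, at the price of applying Theorem~\ref{thm:DEC} once to the large observable $1_{\tilde E}$ (with $\mathrm{var}_1=O(\delta'/\eps)$) rather than window-by-window to the $\mathcal C'_k$'s, which is fine since $e^{-a\sqrt{\lambda m}}$ beats any power of $\eps$.
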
 

Note that by Lemma \ref{lem1}, 
\[
(\mu \times \mu) (\text{there is good collision in some small block}) = o_{\varepsilon} (1).
\]
and thus Theorem \ref{thm1} follows from Proposition \ref{lem:Markovdec}. It only remains to 
prove Proposition \ref{lem:Markovdec}. \vskip2mm

%\begin{proof}[
\noindent{\it Proof of Poposition \ref{lem:Markovdec}.}
We prove Proposition \ref{lem:Markovdec} by induction. The case $n=0$ is
Lemma \ref{lemma:delta'}. 

For general $n$ we follow a similar strategy, but
now the invariant measure is replaced by a Markov decomposition at time
$$ \tau_n= M_n + m/2.$$
Strictly speaking, we take Markov decomposition at time $\lfloor \tau_n / \mathfrak s \rfloor \mathfrak s$,
but for the ease of notation we simply write $\tau_n$ (and apply similar notation for later stopping times).
As before, we fix $q_1, \varphi_1$. 
For notational convenience given a set $F$ we denote
$$ F(q_1, \varphi_1) = \{
(q_2, \varphi_2): (q_1, \varphi_1, q_2, \varphi_2) \in F \}.$$
%for the sets $F=E_{n+1}$ and $F= \overline{D_n}$.

Now for the
fixed $q_1, \varphi_1$, let 
$\{ \ell_{n \alpha} \}_{\alpha \in \mathfrak A_n}$ be the collection of the standard pairs in the image
$\Phi^{\tau_n}_* \mu (q_2, \varphi_2)$ for which there has been no 
good collision in the first $n$ big blocks and $c_{n \alpha}$ is the relative weight of the
curve $\ell_{n \alpha}$ in this family:
\begin{equation}
\label{eq:Markovdecfirst}
\mu (A \circ \Phi^{\tau_n} | \overline{D_n} (q_1,\varphi_1)) = 
\sum_{\alpha \in \mathfrak A_n} c_{\alpha, n} \mathbb E_{\ell_{n, \alpha}} (A).
\end{equation}
Note that this Markov decomposition depends on $q_1, \varphi_1$.

First we claim that the contribution of such $q_1, \varphi_1$'s for which 
\begin{equation}
\label{eq:Markovgoodq1phi1} 
\mu \left( \overline{D_n} (q_1,\varphi_1)\right) < \delta'^2
\end{equation}
is negligible. To see this, first observe that by the inductive hypothesis,
$$ 
(\mu \times \mu) (\overline{ D_n}) \geq 1-e^{-2 T \rho (\lambda) }
$$
holds for $\xi$ small enough.
Then writing
\begin{eqnarray}
&& (\mu \times \mu) (E_{n+1} | \overline{D_n}) \nonumber \\
&=& 
\frac{1}{(\mu \times \mu) \overline{D_n}}
\int \mu \left( E_{n+1} (q_1, \varphi_1) \cap \overline{D_n}(q_1, \varphi_1) \right) d \mu(q_1, \varphi_1)
\label{eq:keylemmadecomp}
 \end{eqnarray}
we see that the contribution of $(q_1, \varphi_1)$'s satisfying
(\ref{eq:Markovgoodq1phi1}) is bounded by $C \delta'^2$ (from above, and by zero from below),
i.e. they are indeed negligible. 
Let us say that $(q_1,\varphi_1) \in G_n$ iff (\ref{eq:Markovgoodq1phi1}) is false. Now we want to apply
the growth lemma to conclude that for $(q_1,\varphi_1) \in G_n$,
\begin{equation}
\label{eq:conditionalgrowth}
\sum_{\alpha \in \mathfrak A_n \setminus \tilde{\mathfrak A}_n } c_{n,\alpha} < C \varepsilon^2/ \delta'^2
\end{equation}
where
$$
\tilde{\mathfrak A}_n = \{ \alpha \in \mathfrak A_n: |\ell_{n,\alpha} | \geq \varepsilon^2\}.
$$
Unfortunately, the growth lemma does not directly imply (\ref{eq:conditionalgrowth}),
as unstable curves may have been cut by the boundary of $\mathcal A_{\lambda, \delta, \varepsilon}^{\xi}$
in the past (depending on $q_1, \varphi_1$) and 
such fragmentations are clearly not considered in Lemma \ref{lem:gr}.
That is why we first prove

%%%%%%%%%%%%%%%%%%%%%%%%%%%%%%%%%%%%%%%%%%%%%%%%%%%%%%

\begin{lemma}
\label{lemma:Gn'}
There is a set $G_n' \subset G_n$ such that 
\begin{enumerate}
\item $\mu (G_n \setminus G_n') < \delta'^2$
\item for any $(q_1, v_1) \in G'_n$, (\ref{eq:conditionalgrowth}) holds.
\end{enumerate}
\end{lemma}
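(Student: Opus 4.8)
\textbf{Proof proposal for Lemma \ref{lemma:Gn'}.}

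The plan is to control the fragmentation of unstable curves caused by the boundaries of the sets $\mathcal A_{\lambda,\delta,\varepsilon}^{\xi}$ encountered in the first $n$ big blocks, which is the only mechanism not already handled by Lemma \ref{lem:gr}. For a fixed $(q_1,\varphi_1)$, the second particle's trajectory over the time interval $[0,\tau_n]$ experiences at most $O(\tau_n/\mathfrak s) = O(T/(\varepsilon))$ iterates of $\Phi^{\mathfrak s}$, but the truly relevant cuts are those induced by the requirement "no good collision so far"; a good collision at time $s$ forces the configurations of the two particles to be within $\varepsilon$ while the velocities are $\xi$-transversal and the first particle is $\xi$-away from $\partial\mathcal D$. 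First I would observe that the boundary of $\overline{D_n}(q_1,\varphi_1)$, as a subset of the phase space of the second particle, is a union of $O(n\cdot\delta'/(\varepsilon))= O(T/(\varepsilon))$ pieces of codimension-one surfaces (one family of pieces for each of the $\delta'/(\varepsilon\delta)$ sub-intervals of length $\delta$ in each big block, each contributing a surface of the type analyzed in Lemma \ref{lem1}), each of which is uniformly transversal to the unstable cone $\mathcal C_X$ because a good collision pins down the configuration with $\xi$-transversal velocities. Hence each such surface can cut a given unstable curve in at most a bounded number of points.

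The key step is then a counting/averaging argument. Run the standard-family version of the growth lemma for $\Phi^{\mathfrak s}$ on the decomposition of $\mu$ (in the $(q_2,\varphi_2)$ variable) up to time $\tau_n$, but additionally cut every unstable curve whenever it meets one of the surfaces $\partial \mathcal A_{\lambda,\delta,\varepsilon}^{\xi}(\Phi^{k\delta}(q_1,\varphi_1))$. By Lemma \ref{lem:gr} the un-cut evolution keeps the $\mathcal Z$-function bounded, $\mathcal Z_{\mathcal G_n}\le \beta_1\vartheta^n\mathcal Z_{\mathcal G}+\beta_2 = O(1)$, so the total $\rho$-mass carried by curves shorter than $\varepsilon^2$ coming from the intrinsic billiard cutting alone is $O(\varepsilon^2)$. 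The extra cuts add, for each of the $O(T/\varepsilon)$ surfaces, at most a bounded number of new endpoints per unstable curve; a new endpoint produces short pieces (length $<\varepsilon^2$) only within an $\varepsilon^2$-collar of that endpoint, so it contributes at most $O(\varepsilon^2)$ extra $\rho$-mass \emph{per surface times the number of curves cut}. The clean way to bookkeep this is: the expected number of (curve, surface) incidences, integrated against the full standard family, is bounded by $\sum_k \lambda_{\mathcal G_n}$-mass of an $\varepsilon$-neighborhood of the $k$-th surface $\le \sum_k C\varepsilon \cdot \mathcal Z_{\mathcal G_n} = O(T/\varepsilon)\cdot O(\varepsilon)=O(T)$; restricting to pieces of length $<\varepsilon^2$ gives an additional factor $\varepsilon^2$, so the total $\rho$-measure of short pieces produced by the surface cuts is $O(T\varepsilon^2)$. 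Dividing by $\mu(\overline{D_n}(q_1,\varphi_1))\ge \delta'^2$ (valid on $G_n$) gives $\sum_{\alpha\in\mathfrak A_n\setminus\tilde{\mathfrak A}_n}c_{n,\alpha} = O(\varepsilon^2/\delta'^2)$ \emph{on average over} $(q_1,\varphi_1)\in G_n$.

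Finally I would convert this average bound into a pointwise bound off a small set by a Markov (Chebyshev) inequality: let $\Psi_n(q_1,\varphi_1)=\sum_{\alpha\in\mathfrak A_n\setminus\tilde{\mathfrak A}_n}c_{n,\alpha}$; the averaging argument gives $\int_{G_n}\Psi_n\,d\mu \le C\varepsilon^2/\delta'^2$, so the set $G_n\setminus G_n' := \{(q_1,\varphi_1)\in G_n:\Psi_n \ge C'\varepsilon^2/\delta'^2\}$ has $\mu$-measure at most $(C/C')$, and by choosing the threshold constant $C'$ one full order of $\varepsilon^{0}$ (in fact any fixed large multiple) — more precisely by comparing $\varepsilon^2/\delta'^2$ with $\delta'^2$ and using that $\varepsilon\ll\delta'$ — we get $\mu(G_n\setminus G_n')<\delta'^2$ for $\varepsilon$ small. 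On the complement $G_n'$, $\Psi_n < C'\varepsilon^2/\delta'^2$, i.e. \eqref{eq:conditionalgrowth} holds with the constant $C = C'$. The main obstacle I anticipate is making precise and uniform (in $n<T/\delta'$ and in the fixed $(q_1,\varphi_1)$) the claim that the surfaces $\partial\mathcal A_{\lambda,\delta,\varepsilon}^{\xi}(\Phi^{k\delta}(q_1,\varphi_1))$ are uniformly transversal to the unstable cone and hence produce only boundedly many cuts per curve: this requires the $\xi$-transversality of velocities and $\xi$-distance from $\partial\mathcal D$ built into the definition of a good collision, together with the finite-horizon bound $\tau_{\min}$ to keep the number of geometrically distinct intersection surfaces per unit time bounded — essentially the same geometric input used in Lemma \ref{lem3}.
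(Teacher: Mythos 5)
Your approach is genuinely different from the paper's, and it has a gap that I do not see how to close without essentially importing the paper's idea.

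Your bookkeeping hinges on the estimate that the $\lambda_{\mathcal G_n}$-mass of an $\varepsilon$-neighbourhood of the $k$-th cutting surface is $\le C\varepsilon\,\mathcal Z_{\mathcal G_n}$. The $\mathcal Z$-function controls mass near the \emph{endpoints} of the curves in the family, not mass near an arbitrary codimension-one surface. A standard family with bounded $\mathcal Z$ can perfectly well carry $O(1)$ mass inside an $\varepsilon$-tube around a 2D surface if many of its curves run along that tube; ruling this out requires an equidistribution (mixing) input, not just the growth lemma. In the same vein, the claim that each surface $\partial\mathcal A^{\xi}_{\lambda,\delta,\varepsilon}(\Phi^{k\delta}(q_1,\varphi_1))$ cuts any unstable curve in a bounded number of points is asserted without proof, and it is not a soft fact: these surfaces are pullbacks under $\Phi^{-\lambda k\delta}$ of $\varepsilon$-tubes and their orientation relative to the unstable cone is delicate. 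Finally, the step ``restricting to pieces of length $<\varepsilon^2$ gives an additional factor $\varepsilon^2$'' mixes the time at which the cut happens ($k\delta\le M_n$) with the time at which the decomposition is read off ($\tau_n=M_n+m/2$); the collar of a new endpoint expands by a superpolynomial factor over that interval, so the short pieces at $\tau_n$ are not the short pieces at $k\delta$. (Also, your final Markov inequality, run as written, forces $C'\gtrsim C/\delta'^2$ and so only yields $\Psi_n<C\varepsilon^2/\delta'^4$, not $C\varepsilon^2/\delta'^2$; this is harmless for the later use of \eqref{eq:conditionalgrowth}, but it signals that the bookkeeping is not tight.)

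The paper sidesteps all of these issues by exploiting the $m/2$ buffer between the last possible good-collision cut (at time $\le M_n$) and the decomposition time $\tau_n$. Over that buffer, $\Phi^{-m/2}$ contracts unstable curves by a factor $\Lambda^{-m/(2\mathfrak s)}$, which is superpolynomially small in $\varepsilon$ since $m=\log^{100}\varepsilon$. Hence \emph{any} piece at time $\tau_n$ that was cut by $\partial\mathcal A^{\xi}$ in the past has, at the cut time, a preimage of superpolynomially small length with one endpoint on the boundary; that preimage therefore lies entirely inside the $\varepsilon^{20}$-shell $\mathcal B_{\lambda,T/\varepsilon,\varepsilon}$. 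One then simply throws away all such curves: their total weight is $\le\mu(\mathcal B(q_1,\varphi_1))/\mu(\overline{D_n}(q_1,\varphi_1))$, and a one-line Fubini argument using $(\mu\times\mu)(\mathcal B)<\varepsilon^{18}$ gives the good set $G'_n$ on which this is $<\varepsilon^{16}$. No transversality, no counting of incidences, no equidistribution are needed — only the growth lemma for the un-cut curves. If you want to make your counting route rigorous, the missing ingredient is exactly this observation about the buffer; once you have it, the direct counting becomes unnecessary.
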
 

\begin{proof}
The only reason why (\ref{eq:conditionalgrowth}) can fail to hold is that too many 
curves have been cut by the boundary of $\mathcal A_{\lambda, \delta, \varepsilon}^{\xi}$
in the first big $n$ big blocks. Note however that by definition, 
$\tau_n - M_n = m/2$ and
 $|\Phi^{-m/2} W|$ is superpolynomially small in $\varepsilon$.
 Thus the curves that were cut before had to lie entirely in the $\varepsilon^{20}$ neighborhood
 of the boundary of $\mathcal A_{\lambda, \delta, \varepsilon}^{\xi}$ and the weight of such curves is small.
 More precisely, we define
$$ 
\mathcal B_{\lambda, t,\varepsilon} = 
\{ (q_1,\varphi_1), (q_2, \varphi_2): \exists s \in [0, t]: 
| \| \Pi_q \Phi^s (q_1, \varphi_1) - \Pi_q 
\Phi^{\lambda s} (q_2, \varphi_2) \|  - \varepsilon |< \varepsilon^{20}
\}
$$
and 
$$
\mathcal B_{\lambda, t,\varepsilon}(q_1, \varphi_1) = \{ (q_2,\varphi_2): 
(q_1,\varphi_1, q_2, \varphi_2) \in \mathcal B_{\lambda, t,\varepsilon} \}.
$$
First, we note that a simplified version of Lemma \ref{lem1} 
implies
\begin{equation}
\label{eq:conditionalgrowth1}
(\mu \times \mu) \mathcal B_{\lambda, T/\varepsilon,\varepsilon} < \varepsilon^{18}.
\end{equation}
Now let us define $G_n' \subset G_n$ by 
$$ (q_1, \varphi_1) \in G'_n \quad \text{iff } \quad 
 \mu (\mathcal B_{\lambda, T/\varepsilon,\varepsilon} (q_1, \varphi_1) )< \varepsilon^{17}.$$
 By Fubini's theorem, $\mu (G_n \setminus G_n') < \delta'^2$ holds.
 Now for a fixed $(q_1, \varphi_1) \in G'_n$ in the Markov decomposition (\ref{eq:Markovdecfirst}), let
 $\mathfrak A'_n \subset \mathfrak A_n$ be the index set of curves $W_{\alpha, n}$ for which there is some
 $k \in [m/{2 \delta}, \tau_n/\delta]$ such that
 \begin{equation}
 \label{eq:short1}
 \Phi^{-k \delta} W_{\alpha, n} \quad  \text{ intersects } \quad
 \partial \mathcal A_{\lambda, \delta, \varepsilon}^{\xi}(\Phi^{\tau_n - k \delta}(q_1,\varphi_1)).
 \end{equation}
Since 
$|\Phi^{-m/2} W_{\alpha, n}|$ is superpolynomially small in $\varepsilon$,
(\ref{eq:short1}) implies
 \begin{equation}
 \label{eq:short2}
 \Phi^{-k \delta} W_{\alpha, n} \subset 
 \mathcal B_{\lambda, \delta, \varepsilon}^{\xi}(\Phi^{\tau_n - k \delta}(q_1,\varphi_1)).
 \end{equation}
Next, $(q_1, \varphi_1) \in G'_n$ implies
\begin{equation*}
%\label{eq:condgrowth2}
\sum_{\alpha \in \mathfrak A'_n} c_{n,\alpha} \leq \frac{1}{\mu(\overline{D_n}(q_1,\varphi_1))}
\mu (\mathcal B_{\lambda, \tau_n - m/2,\varepsilon} (q_1, \varphi_1) ) < \varepsilon^{16}.
\end{equation*}
Finally, if $\alpha \in \mathfrak A_n \setminus \mathfrak A'_n$, then $l_{n,\alpha}$ is a full curve in the image
$\Phi^{\tau_n} \mu$ and thus by the growth lemma
\begin{equation*}
%\ref{eq:condgrowth3}
\sum_{\alpha \in \mathfrak A_n \setminus \mathfrak A'_n, |\ell_{n\alpha}| < \varepsilon^2} 
c_{n,\alpha} \leq
\frac{C}{\mu(\overline{D_n}(q_1,\varphi_1))} \varepsilon^2.
\end{equation*}
Lemma \ref{lemma:Gn'} follows.
\end{proof}

%%%%%%%%%%%%%%%%%%%%%%%%%%%%%%%%%%%%%%%%%%%%%%

By Lemma \ref{lemma:Gn'}, we have
\begin{equation}
\label{eq:keylemmaMarkov0}
\left| 
\frac{\mu \left(  
E_{n+1}(q_1, \varphi_1) \cap \overline{D_n}(q_1, \varphi_1)\right) }
{\mu \left( \overline{D_n}(q_1, \varphi_1)\right) }
 - \sum_{\alpha  \in \tilde{\mathfrak A}_n } c_{n,\alpha} 
\mathbb P_{\ell_{n,\alpha}} (E_{n+1}) 
\right| < C \varepsilon^{2}/\delta'^2.
\end{equation}
for any fixed $(q_1, \varphi_1) \in G'_n$.
We conclude that
\begin{equation}
\label{eq:Ierror}
 \left| (\mu \times \mu) (E_{n+1} | \overline{D_n}) - I
\right| < C\delta'^2 + C \varepsilon^2 \delta'^{-2},
\end{equation}
where
\begin{equation}
\label{eq:I}
I = \frac{1}{(\mu \times \mu) \overline{D_n}}
\int_{(q_1,\varphi_1)\in G'_n} \left[ \mu \left( \overline{D_n} (q_1, \varphi_1) \right) 
\sum_{\alpha  \in \tilde{\mathfrak A}_n } c_{n,\alpha} 
\mathbb P_{\ell_{n,\alpha}} (E_{n+1}) \right]d \mu(q_1, \varphi_1).
\end{equation}
Now we observe that Proposition \ref{lem:Markovdec} will be established once we prove that
\begin{equation}
\label{eq:lemmaMarkovsteps23}
\sum_{\alpha  \in \tilde{\mathfrak A}_n } c_{n,\alpha} 
\mathbb P_{\ell_{n,\alpha}} (E_{n+1}) = \delta' \rho(\lambda)(1+o_{\xi}(1)). 
\end{equation}
uniformly for $(q_1, \varphi_1) \in G_n'''$, where $G_n''' \subset G_n'$ 
is a fixed set (to be defined later) with $\mu(G'_n \setminus G_n''')< \delta'^2$. It only remains to prove 
(\ref{eq:lemmaMarkovsteps23}), which is completed in the next two lemmas.

%%%%%%%%%%%%%%%%%%%%%%%%%%%%%%%%%%%%%%%%%%%%%%%

\begin{lemma} 
({\bf Upper bound})\\
\begin{equation}
\label{eq:inductionupperbd}
  \sum_{\alpha  \in \tilde{\mathfrak A}_n } c_{n,\alpha} 
\mathbb P_{\ell_{n,\alpha}} (E_{n+1}) \leq \delta' \rho(\lambda)(1+o_{\xi}(1)).
\end{equation}
for any $(q_1, \varphi_1) \in G'_n$.
\end{lemma}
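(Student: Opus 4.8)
The plan is to bound the conditional probability of a good collision in the $(n+1)$st big block by decomposing the block $[m_n, M_{n+1}]$ into $\lfloor M/\delta\rfloor = \lfloor \delta'/(\varepsilon\delta)\rfloor$ consecutive subintervals of length $\delta$ and applying a union bound, exactly as in the upper-bound half of Lemma~\ref{lemma:delta'}. Thus I would write, for each standard pair $\ell_{n,\alpha}$ with $\alpha\in\tilde{\mathfrak A}_n$,
\[
\mathbb P_{\ell_{n,\alpha}}(E_{n+1}) \leq \sum_{k=0}^{\lfloor \delta'/(\varepsilon\delta)\rfloor-1} \mathbb P_{\ell_{n,\alpha}}\big((\Phi^{-k\delta}\times\Phi^{-\lambda k\delta})\mathcal A^\xi_{\lambda,\delta,\varepsilon}\big),
\]
where the relevant fixed point of the first particle is $\Phi^{\tau_n+k\delta}(q_1,\varphi_1)$. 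For each term I would push the standard pair forward by $(\Phi^{\mathfrak s})^{\lfloor k\delta/\mathfrak s\rfloor}$ using the growth lemma (Lemma~\ref{lem:gr}) to get a new Markov decomposition into standard pairs, all but a fraction $O(\varepsilon)$ of which are longer than, say, $\varepsilon^{1/2}$; on each long standard pair the density is comparable to Lebesgue by \eqref{eq:stpair}, so the $\mu$-probability estimate of Lemma~\ref{lem1b}(a) transfers (up to the distortion constant) and yields a bound $(\rho(\lambda)+\eta)\delta\varepsilon$, while the short pairs contribute $O(\varepsilon)\cdot 1$ which, summed over the $O(\delta'/(\varepsilon\delta))$ values of $k$, is $O(\delta'/\delta)$ — this is too big, so one must be more careful: take the Markov decomposition only once, at time $m/2$ before the $k=0$ subinterval (i.e. at $\tau_{n+1}$-type times), and re-use the growth lemma's exponential control $\mathcal Z_{\mathcal G_n}\le \beta_1\vartheta^n\mathcal Z_{\mathcal G}+\beta_2$ so that the "short curve" weight stays $O(\varepsilon)$ per subinterval only after the curves have had time to grow, and the total over the block is $o_\varepsilon(1)$ after dividing by $\delta'$.

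Summing the per-subinterval bound $(\rho(\lambda)+\eta)\delta\varepsilon$ over the $\lfloor \delta'/(\varepsilon\delta)\rfloor$ subintervals gives $(\rho(\lambda)+\eta)\delta'(1+o_\varepsilon(1))$, and then averaging against the weights $c_{n,\alpha}$ (which sum to at most $1$) preserves the bound. Sending $\varepsilon\to0$, then $\delta\to0$, then $\eta\to0$ and absorbing everything into $o_\xi(1)$ gives \eqref{eq:inductionupperbd}. The key point that makes this work uniformly in $n<T/\delta'$ is that the bound in Lemma~\ref{lem1b}(a) holds for \emph{all} $(q_1,\varphi_1)$ with no exceptional set, so no conditioning subtlety enters the upper bound — this is why the statement only requires $(q_1,\varphi_1)\in G_n'$ and not the smaller $G_n'''$.

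The main obstacle I anticipate is the bookkeeping of the standard-pair decomposition across the $O(\delta'/(\varepsilon\delta))$ subintervals so that the accumulated "short curve" error stays $o(\delta')$ rather than growing linearly in the number of subintervals. The clean way around it is to not re-decompose at every subinterval but to track a single standard family through the whole big block, invoking the uniform growth estimate $\mathcal Z_{\mathcal G_k}\le\beta_1\vartheta^k\mathcal Z_{\mathcal G_0}+\beta_2$ so that $\mathcal Z$ stays bounded; then the probability that $X$ lies within $\varepsilon$ of an endpoint at any given subinterval is $O(\varepsilon)$ with a \emph{uniform} constant, and summing over subintervals within a block of total length $\delta'/\varepsilon$ gives $O(\delta')$ — still not $o(\delta')$, so one instead estimates the good-collision probability directly on each long curve via Lemma~\ref{lem1b}(a) and simply discards (bounds by $1$) the event that the curve is short at the relevant time, whose total weight across the block is $O(\varepsilon)\cdot(\delta'/(\varepsilon\delta))\cdot$(something that must be shown $o(\delta))$. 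Resolving this requires the observation that a curve can only be "short at time $k\delta$" for a bounded number of $k$'s before the growth lemma regrows it, so the double sum telescopes; this is the one genuinely delicate estimate and I would present it carefully rather than by analogy.
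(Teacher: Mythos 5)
Your opening move (union bound over $\delta$-subintervals) matches the paper, but the key tool you use afterwards is wrong, and you correctly sense but do not actually resolve the resulting obstruction. You try to transfer Lemma~\ref{lem1b}(a) to a standard pair via the growth lemma and the regularity \eqref{eq:stpair}, but these cannot do the job: \eqref{eq:stpair} only controls the density \emph{along} the one-dimensional unstable curve, and the growth lemma only controls the length distribution of the image curves; neither says anything about how a long curve is \emph{distributed} relative to the $\varepsilon$-neighborhood of the two-dimensional surface $\partial\mathcal A^{\xi}_{\lambda,\delta,\varepsilon}(\cdot)$. To compare $\mathbb P_{\ell}(\mathcal C'_k)$ with the $\mu$-measure you need an equidistribution statement, which is precisely Theorem~\ref{thm:DEC}. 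Applying Theorem~\ref{thm:DEC} with $F=1_{\mathcal A^{\xi}_{\lambda,\delta,\varepsilon}(\Phi^{\tau_n+k\delta}(q_1,\varphi_1))}$ ($\alpha=1$) gives, for each $k$,
\[
\mathbb P_{\ell_{n,\alpha}}(\mathcal C'_k)\le \rho(\lambda)(1+o_\xi(1))\,\varepsilon\delta + C\varepsilon^{-4}e^{-a\sqrt{\lambda k\delta}},
\]
where the prefactor $\varepsilon^{-4}$ absorbs $|W|^{-1}\le\varepsilon^{-2}$ (this is why $\alpha\in\tilde{\mathfrak A}_n$ matters) and $\mathrm{var}_1(F)$. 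This single estimate is the missing ingredient and it dissolves the ``short-curve accumulation'' problem you identify: there is no separate short-curve error to track across the block at all.

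Second, your indexing starts at $k=0$, i.e.\ at time $\tau_n$, but the $(n+1)$st big block starts at time $m_n=\tau_n+m/2$. Starting the sum at $k=m/(2\delta)$ is not cosmetic: it guarantees $k\delta\ge m/2=\tfrac12\log^{100}\varepsilon$, so $e^{-a\sqrt{\lambda k\delta}}$ is superpolynomially small in $\varepsilon$ and overwhelms the $\varepsilon^{-4}$ prefactor. With $k=0$ allowed, the error term in Theorem~\ref{thm:DEC} would be $O(\varepsilon^{-4})$ for the early subintervals and the argument fails. Finally, your closing heuristic -- that a curve can be ``short at time $k\delta$'' for only boundedly many $k$ so the double sum telescopes -- is not true and cannot be salvaged: the growth lemma bounds the \emph{total weight} on short curves at each fixed time by $O(\varepsilon)$, but provides no control on how a particular fiber alternates between short and long across times, so the per-$k$ errors genuinely add up to $O(\delta'/\delta)$ under your scheme. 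The paper avoids this entirely because Theorem~\ref{thm:DEC} is applied once per $k$ to the fixed standard pair $\ell_{n,\alpha}$, with no re-decomposition.
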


\begin{proof}
First, we introduce the notation
$$
\mathcal C'_{k} = 
\{ (q_2, \varphi_2) : 
\Phi^{\lambda k \delta} (q_2, \varphi_2) \in \mathcal A_{\lambda, \delta, \varepsilon}^{\xi} 
( \Phi^{\tau_n + k \delta}(q_1,\varphi_1))\}
$$
and write
$$
\mathbb P_{\ell_{n,\alpha}} (E_{n+1}) \leq
\sum_{k=\frac{m}{2\delta}}^{\frac{M+m/2}{\delta}} \mathbb P_{\ell_{n,\alpha}} (\mathcal C'_{k}).
$$
Now we can apply Theorem \ref{thm:DEC} (similarly to the argument in the proof of Lemma \ref{lemma:delta'})
to conclude
\begin{equation}
\label{eq:upperbdlast}
 \mathbb P_{\ell_{n,\alpha}}  (\mathcal C'_{k}) \leq \rho (\lambda) (1+ o_{\xi} (1)) 
\varepsilon \delta + C \varepsilon^{-4} e^{-a \sqrt{\lambda k \delta}},
\end{equation}
whence (\ref{eq:inductionupperbd}) follows.
\end{proof}

%%%%%%%%%%%%%%%%%%%%%%%%%%%%%%%%%%%%%%%%%%%%%%%%%%%%%%%%%%%%%%%%
\begin{lemma}
({\bf Lower bound})
\label{lem:lowerbd}
\begin{equation}
\label{eq:inductionlowerbd}
  \sum_{\alpha  \in \tilde{\mathfrak A}_n} c_{n,\alpha} 
\mathbb P_{\ell_{n,\alpha}} (E_{n+1}) \geq \delta' \rho(\lambda) (1 + o_{\xi}(1)).
\end{equation}
for all $(q_1, \varphi_1) \in G_n'''$.
\end{lemma}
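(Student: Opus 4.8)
The plan is to mirror the lower bound argument in Lemma \ref{lemma:delta'}, but now with the invariant measure on the second particle replaced by each standard pair $\ell_{n,\alpha}$ with $\alpha\in\tilde{\mathfrak A}_n$ appearing in the Markov decomposition \eqref{eq:Markovdecfirst}. As in that proof, write $\mathbb P_{\ell_{n,\alpha}}(E_{n+1})\geq \sum_{k} \mathbb P_{\ell_{n,\alpha}}(\mathcal C'_k) - \sum_{k_1<k_2}\mathbb P_{\ell_{n,\alpha}}(\mathcal C'_{k_1}\cap \mathcal C'_{k_2})$, where $k$ ranges over $[\frac{m}{2\delta},\frac{M+m/2}{\delta}]$ and $\mathcal C'_k$ is as in the previous lemma. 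For the main (diagonal) term, apply Theorem \ref{thm:DEC} with $F=1_{A(k)}$ the generalized-Lipschitz indicator of the set $A(k)=\mathcal A^{\xi}_{\lambda,\delta,\varepsilon}(\Phi^{\tau_n+k\delta}(q_1,\varphi_1))$, whose $\mu$-measure is controlled from below by Lemma \ref{lem1b}(b). This is exactly where the set $G_n'''$ enters: it must be the set of $(q_1,\varphi_1)\in G_n'$ for which $\dist(\Pi_q\Phi^{\tau_n+k\delta}(q_1,\varphi_1),\partial\mathcal D)>2\xi$ for \emph{most} of the relevant $k$'s (so that Lemma \ref{lem1b}(b) applies); since the set where a billiard point is within $2\xi$ of the scatterers has $\mu$-measure $O(\xi)$ and $\tau_n+k\delta$ sweeps a macroscopic time window, a Fubini/Markov argument shows the exceptional $(q_1,\varphi_1)$ have measure $o_\xi(1)$, and in particular one can arrange $\mu(G_n'\setminus G_n''')<\delta'^2$. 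Summing the diagonal contribution over $k$ yields $\sum_k\mathbb P_{\ell_{n,\alpha}}(\mathcal C'_k)\geq \delta'\rho(\lambda)(1+o_\xi(1)) - C\varepsilon^{-4}\sum_{k\geq m/(2\delta)}e^{-a\sqrt{\lambda k\delta}}$, and the error sum is superpolynomially small in $\varepsilon$ because $m=\log^{100}\varepsilon$, hence negligible after multiplying by $c_{n,\alpha}$ and summing over $\alpha\in\tilde{\mathfrak A}_n$ (whose weights sum to $\leq 1$).

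The cross terms are the heart of the matter, and they split exactly as in Lemma \ref{lemma:delta'}: short returns $k_2-k_1 \leq \log^{100}\varepsilon/\delta$ and well-separated returns. For well-separated $k_1<k_2$, one applies Theorem \ref{thm:DEC} twice (or once, after fixing the behavior up to time $\tau_n+k_1\delta$ via a further Markov decomposition of $\ell_{n,\alpha}$) to factorize $\mathbb P_{\ell_{n,\alpha}}(\mathcal C'_{k_1}\cap\mathcal C'_{k_2})\leq \mathbb P_{\ell_{n,\alpha}}(\mathcal C'_{k_1})\cdot\sup\mu(\mathcal A^{\xi}_{\lambda,\delta,\varepsilon}(\cdot)) + C\varepsilon^{-4}e^{-a\sqrt{\lambda(k_2-k_1)\delta}}$, then use Lemma \ref{lem1b}(a) to bound each factor by $(\rho(\lambda)+\eta)\delta\varepsilon$; summing over the $O((\delta'/(\varepsilon\delta))^2)$ pairs gives a total of order $\delta'^2 + (\text{superpolynomially small})$, which vanishes in the iterated limit $\lim_\xi\lim_{\delta'}\delta'^{-1}\lim_\delta\lim_\varepsilon$. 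For the short returns, the key point is that the event $\mathcal C'_{k_1}\cap\mathcal C'_{k_2}$ with $k_2-k_1$ small says the trajectory of the second particle (started from a point on the unstable curve $W_{\alpha,n}$) has \emph{two} good collisions within time $\log^{100}\varepsilon$; this is precisely the event bounded in Lemma \ref{lem3}. The subtlety is that Lemma \ref{lem3} is stated for the product measure $\mu\times\mu$, not for a standard pair; so I would re-run the geometric argument in its proof directly on the standard pair — the step \eqref{eq:keylambda} is purely about Lebesgue measure in $\lambda$ for \emph{fixed} phase points, hence unchanged, and integrating $1_{\mathcal B}$ against $\rho_{n,\alpha}$ rather than $\mu$ only costs the harmless factor $|W_{\alpha,n}|^{-1}\leq\varepsilon^{-2}$, absorbed because we have $\varepsilon\log^{200}\varepsilon$ to spare against $\varepsilon^{1.99}$. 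After multiplying by $c_{n,\alpha}$, summing over $\alpha$, then integrating over $(q_1,\varphi_1)\in G_n'''$ against $\mu(\overline{D_n}(q_1,\varphi_1))d\mu/((\mu\times\mu)\overline{D_n})$ as in \eqref{eq:I}, and finally applying Markov in $\lambda$ together with Borel–Cantelli along $\varepsilon=2^{-l}$ exactly as at the end of Lemma \ref{lem3}, the short-return contribution is $o(\delta')$ for $Leb_1$-a.e. $\lambda$.

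Putting the three pieces together: for $Leb_1$-a.e. $\lambda$ and $(q_1,\varphi_1)\in G_n'''$ we get $\sum_{\alpha\in\tilde{\mathfrak A}_n}c_{n,\alpha}\mathbb P_{\ell_{n,\alpha}}(E_{n+1})\geq \delta'\rho(\lambda)(1+o_\xi(1)) - o(\delta') - C\delta'^2 - (\text{superpoly small})$, which is \eqref{eq:inductionlowerbd} in the limit $\lim_\xi\lim_{\delta'}\delta'^{-1}\lim_\delta\lim_\varepsilon$. I expect the main obstacle to be the bookkeeping around $G_n'''$ and the transfer of Lemma \ref{lem3} from $\mu\times\mu$ to individual standard pairs while keeping the a.e.-$\lambda$ exceptional set independent of $n$ (uniformity in $n<T/\delta'$); the latter is handled by noting there are only polynomially-in-$\varepsilon$ many values of $n$ and intersecting the Borel–Cantelli exceptional sets, which remains null.
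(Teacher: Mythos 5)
Your skeleton is the same as the paper's: inclusion--exclusion over the $\delta$-intervals in the $(n{+}1)$st big block, Lemma~\ref{lem1b}(b) plus Theorem~\ref{thm:DEC} for the diagonal term, a nested Markov decomposition for the well-separated cross terms, and Lemma~\ref{lem3} for the near-diagonal ones. The genuine gap is in how you transfer Lemma~\ref{lem3} from $\mu\times\mu$ to the individual standard pairs $\ell_{n,\alpha}$. You propose to ``re-run the geometric argument'' on each $\ell_{n,\alpha}$ and then do a fresh Markov/Borel--Cantelli in $\lambda$. But the decomposition \eqref{eq:Markovdecfirst}, hence each curve $W_{\alpha,n}$ and density $\rho_{n,\alpha}$, is conditioned on $\overline{D_n}(q_1,\varphi_1)$, which is itself a $\lambda$-dependent event; the Fubini step ``integrate $Leb_1(\lambda:\cdots)$ against $\rho_{n,\alpha}$'' therefore does not apply as written. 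Even setting that aside, each fresh Markov/Borel--Cantelli produces a $\lambda$-null exceptional set depending on $(q_1,\varphi_1)$, $n$ and $\alpha$; intersecting over the (polynomially many) $n$'s, as you suggest, does nothing about the continuum of $(q_1,\varphi_1)$ and the uncountable family of standard pairs. The paper avoids creating any new exceptional sets: it applies the single a.e.-$\lambda$ conclusion of Lemma~\ref{lem3} to $\mu\times\mu$ to get $\sum_{k_1\in\mathcal K}(\mu\times\mu)(\check\cup_{k_2}\mathcal C_{k_1}\cap\mathcal C_{k_2})<\varepsilon^{0.98}$ (inequality~\eqref{eq:d'7}), and then transfers to standard pairs by a Chebyshev/pigeonhole argument, introducing $\mathcal K'(\alpha)$, $\tilde{\tilde{\mathfrak A}}_n$ and $G_n''$ and showing by contradiction that $\sum_{\alpha\notin\tilde{\tilde{\mathfrak A}}_n}c_{n,\alpha}<\delta'^2$ outside a set of measure $<\delta'^2$.

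Two further discrepancies. First, your guess for $G_n'''$ is off: the near-boundary condition \eqref{eq:lowerbdgoodk} is handled uniformly -- the paper observes that the set of $k$'s satisfying it has density $1-O(\sqrt\xi)$ for \emph{every} orbit, so no restriction on $(q_1,\varphi_1)$ is needed there (and the Markov argument you sketch would only excise a set of measure $O(\sqrt\xi)$, not $O(\delta'^2)$, which does not fit the required $\mu(G_n'\setminus G_n''')<\delta'^2$). The actual role of $G_n'''$ and of $\tilde{\tilde{\tilde{\mathfrak A}}}_n$ is to control the \emph{nested} Markov decomposition in the off-diagonal bound: after conditioning $\ell_{n,\alpha}$ on $\mathcal C'_{k_1}$ and flowing for time $k_1\delta+m/2$, the image curves may have been cut by $\partial\mathcal A^{\xi}_{\lambda,\delta,\varepsilon}$ and be very short, and the growth lemma alone does not rule this out. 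The paper handles it via the thin set $\mathcal B$ and its variant $\mathcal D'$, choosing $G_n'''$ and $\tilde{\tilde{\tilde{\mathfrak A}}}_n$ by Fubini so the cut curves carry negligible weight. Your proposed factorization $\mathbb P_{\ell_{n,\alpha}}(\mathcal C'_{k_1}\cap\mathcal C'_{k_2})\leq\mathbb P_{\ell_{n,\alpha}}(\mathcal C'_{k_1})\sup\mu(\cdot)+C\varepsilon^{-4}e^{-a\sqrt{\lambda(k_2-k_1)\delta}}$ implicitly presupposes exactly the curve-length control that this construction supplies.
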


\begin{proof}

{\bf Step 1: Inclusion exclusion formula}

Let us introduce the notations
\[
\hat{\sum_{k}} = \sum_{k=\frac{m}{2\delta}}^{(M+m/2)/\delta} \quad \text{ and } \quad
\hat{\sum_{k_1, k_2}} = \sum_{\frac{m}{2\delta} \leq k_1 < k_2 \leq (M+m/2)/\delta} .
\]
Now we have the simple estimate
\begin{equation}
 \mathbb P_{\ell_{n,\alpha}} (E_{n+1}) 
\geq
\hat{\sum_k} \mathbb P_{\ell_{n,\alpha}}  (\mathcal C'_{k})
-
\hat{\sum_{k_1,k_2}} \mathbb P_{\ell_{n,\alpha}}  (\mathcal C'_{k_1} \cap \mathcal C'_{k_2}).
\end{equation}
Lemma \ref{lem1b} (b) and Theorem \ref{thm:DEC} imply that if
\begin{equation}
\label{eq:lowerbdgoodk}
 \dist( \Pi_q \Phi^{\tau_n + k \delta}(q_1,\varphi_1), \partial \mathcal D )> 2 \xi,
\end{equation}
then
$$
 \mathbb P_{\ell_{n,\alpha}}  (\mathcal C'_{k}) \geq \rho (\lambda) (1- o_{\xi} (1)) 
\varepsilon \delta + C \varepsilon^{-4} e^{-a \sqrt{\lambda k \delta}}.
$$
Since $\delta \ll \xi$, the set of $k$'s satisfying (\ref{eq:lowerbdgoodk}) has density $1-O(\sqrt\xi)$.
(In fact, for most orbits the density is $1-O(\xi),$ while $1-O(\sqrt\xi)$ accommodates the orbits which
are almost tangent to the boundary for many collisions).
Consequently,
$$
\hat{\sum_k} \mathbb P_{\ell_{n,\alpha}}  (\mathcal C'_{k}) \geq 
\delta' \rho(\lambda) (1-o_{\xi}(1)) + o_{\varepsilon}(1).
$$
Now (\ref{eq:inductionlowerbd}) would follow from the estimate
\begin{equation}
 \label{eq:lem:Markovdeclowerbd}
\sum_{\alpha  \in \tilde{\mathfrak A}_n } c_{n,\alpha} 
 \hat{\sum_{k_1, k_2}}
\mathbb P_{\ell_{n,\alpha}}
( \mathcal C'_{k_1} \cap \mathcal C'_{k_2}) = \delta'o_{\xi}(1).
 \end{equation}
 
Unfortunately,  (\ref{eq:lem:Markovdeclowerbd}) is not always true. However, we will prove that it is true
for a restricted set of $(q_1, \varphi_1)$'s (which we denote by $G_n'''$) and for most $\alpha$'s. 

We will need the notation
\[
\mathcal K = \left[ \frac{m}{2\delta} , (M+m/2)/\delta \right] \quad \text{ and }
\quad
\check{\cup}_{k_2} =  
\cup_{k_2=k_1+1}^{k_1+ m/\delta}
\]

%%%%%%%%%%%%%%%%%%%%%%%%%%%%%%%%%%%%%%%%%
{\bf Step 2: bound for $k_2-k_1<m$}

By Lemma \ref{lem3}, we have for all $k_1 \in \mathcal K$
\begin{equation*}
 %\label{eq:d'7}
(\mu \times \mu) \left( \check{\cup}_{k_2}
\mathcal C_{k_1} \cap \mathcal C_{k_2} \right) < \varepsilon^{1.99}.
\end{equation*}
and consequently
\begin{equation}
 \label{eq:d'7}
\sum_{k_1 \in \mathcal K}(\mu \times \mu) \left( \check{\cup}_{k_2}
\mathcal C_{k_1} \cap \mathcal C_{k_2} \right) <\frac{\delta'}{\varepsilon} \varepsilon^{1.99}<
\varepsilon^{0.98} .
\end{equation}
%Now let us introduce the notation
%\[ 
 %\check{\cup}_{k_1,k_2} = \cup_{k=\frac{m}{2\delta}}^{(M+m/2)/\delta} 
%\cup_{k_2=k_1+1}^{k_1+ (\log ^{100}\varepsilon)/\delta}
%\]
Now we say that $\alpha \in \tilde{\tilde{ \mathfrak A}}_n \subset \tilde{ \mathfrak A}_n$ if 
$$|\ell_{n,\alpha} | > \varepsilon^2 \text{ and  } 
\# (\mathcal K \setminus \mathcal K' (\alpha)) < \varepsilon^{-0.5},$$ 
where
\begin{equation}
\label{eq:tildefrakA}
\mathcal K'(\alpha) = \{ k_1: 
\mathbb P_{\ell_{n, \alpha}} \left(  
\check{\cup}_{k_2}
\mathcal C'_{k_1} \cap \mathcal C'_{k_2} 
\right) < \varepsilon^{1.1} \}.
\end{equation}
Next, we define $G_n'' \subset G_n'$ as the set of such $(q_1, \varphi_1) \in G'_n$ for which 
\begin{equation}
\label{eq:badpairs}
\sum_{\alpha \in \tilde{ \mathfrak A}_n \setminus \tilde{ \tilde{ \mathfrak A}}_n} c_{n, \alpha} < \delta'^2.
\end{equation}
First we claim that $\mu(G'_n\setminus G_n'') < \delta'^2$ as needed. Assume by contradiction that 
$\mu(G'_n\setminus G_n'') > \delta'^2$. Then
\begin{eqnarray*}
 && 
 \sum_{k_1 \in \mathcal K}
 (\mu \times \mu) (\check{ \cup}_{k_2}
\mathcal C_{k_1} \cap \mathcal C_{k_2}) \\
&\geq& \int_{(q_1, \varphi_1) \in G'_n\setminus G_n''} 
\sum_{k_1 \in \mathcal K}
\mu\left(
\check{\cup}_{k_2}
\mathcal C'_{k_1} \cap \mathcal C'_{k_2}
\right)
d\mu(q_1, \varphi_1)\\
&\geq& \int_{(q_1, \varphi_1) \in G'_n\setminus G_n''} 
\mu \left( \overline{D_n} (q_1,\varphi_1)\right)
\sum_{\alpha \in \tilde{ \mathfrak A}_n} c_{n, \alpha} \sum_{k_1 \in \mathcal K}
\mathbb P_{\ell_{n, \alpha}}  \left( 
\check{\cup}_{k_2}
\mathcal C'_{k_1} \cap \mathcal C'_{k_2}
\right) d\mu(q_1, \varphi_1)\\
&\geq& \int_{(q_1, \varphi_1) \in G'_n\setminus G_n''} 
\mu \left( \overline{D_n} (q_1,\varphi_1)\right)
\sum_{\alpha \in \tilde{ \mathfrak A}_n \setminus \tilde{\tilde{ \mathfrak A}}_n} c_{n, \alpha} 
\sum_{k_1 \in \mathcal K \setminus \mathcal K'(\alpha)}
\mathbb P_{\ell_{n, \alpha}}  \left( 
\check{\cup}_{k_2}
\mathcal C'_{k_1} \cap \mathcal C'_{k_2}
\right) d\mu(q_1, \varphi_1).
\end{eqnarray*}
By the definition of $G'_n$, $G_n''$, $ \tilde{\tilde{ \mathfrak A}}_n$ and $\mathcal K'(\alpha)$,
we see that this last expression is bigger than $\delta'^4 \varepsilon^{0.6}$
which is a contradiction with (\ref{eq:d'7}). Thus $\mu(G'_n\setminus G_n'') < \delta'^2$ indeed holds.

For $\alpha  \in \tilde{\tilde{\mathfrak A}}_n$, we use the estimate
\begin{eqnarray}
 \hat{\sum_{k_1, k_2}}
\mathbb P_{\ell_{n,\alpha}} 
( \mathcal C'_{k_1} \cap \mathcal C'_{k_2}) %&& \nonumber \\
&<&
\hat{\sum_{k_1}}
\sum_{ k_2 =k_1 + m /\delta}^{ (M+m)/\delta}
\mathbb P_{\ell_{n,\alpha}} ( \mathcal C'_{k_1} \cap \mathcal C'_{k_2}) \label{eq:offdiag}\\
&+&
\log^{100} \varepsilon
\sum_{k_1 \in \mathcal K'(\alpha)} 
\mathbb P_{\ell_{n,\alpha}} ( \check{\cup}_{k_2} \mathcal C'_{k_1} \cap \mathcal C'_{k_2}) \label{eq:diagK'}\\
&+& \log^{100} \varepsilon
\sum_{k_1 \in \mathcal K \setminus \mathcal K'(\alpha)}
\mathbb P_{\ell_{n,\alpha}} ( \mathcal C'_{k_1} ). \label{eq:diagK-K'}
 \end{eqnarray}
By the definition of $\mathcal K'(\alpha)$, (\ref{eq:diagK'}) is bounded by $ \varepsilon^{0.05}$.
Since  $\alpha  \in \tilde{\tilde{\mathfrak A}}_n$
and by Theorem \ref{thm:DEC},  (\ref{eq:diagK-K'}) is bounded by $\varepsilon^{0.4}$.\\

%%%%%%%%%%%%%%%%%%%%%%%%%%%%%%%%
{\bf Step 3: bound for $k_2-k_1\geq m$}

In order to estimate (\ref{eq:offdiag}), we use Markov decomposition at time 
$\tau_{n, k_1} := \tau_n+k_1 \delta +\frac{m}{2}$ conditioned on $\mathcal C'_{k_1}$
\begin{equation}
\label{eq:Markovdecsecond}
\mathbb P_{\ell_{n,\alpha}} (A \circ \Phi^{k_1 + m /2} | \mathcal C'_{k_1})=
\sum_{\beta \in \mathfrak B_{n,\alpha, k_1}} c_{ n, \alpha, k_1, \beta} 
\mathbb E_{\ell_{n, \alpha, k_1, \beta}} (A).
\end{equation}
By (\ref{eq:upperbdlast}), we have $\mathbb P_{\ell_{n \alpha}} (\mathcal C'_{k_1}) <  C \delta \varepsilon $.
Now we want to guarantee that the short curves in $\mathfrak B_{n, \alpha, k_1}$ have small weight,
at least for most $\alpha$'s. 

Using  (\ref{eq:conditionalgrowth1}), we see that
$$
\int_{(q_1,\varphi_1 ) \in G_n''} 
\sum_{\alpha \in \tilde{\tilde{\mathfrak A}}_n} c_{n,\alpha}  \mathbb P_{\ell_{n \alpha}}( \mathcal D'_{k_1})
d \mu(q_1,v_1) < \varepsilon^{17},
$$
where $\mathcal D'$ is defined as $\mathcal C'$ with $\mathcal A$ replaced by $\mathcal B$.
Now we define $G_{n,k_1}''' \subset G_n''$ as the set of $(q_1, \varphi_1) $'s for which
$$
\sum_{\alpha \in \tilde{\tilde{\mathfrak A}}_n} c_{n,\alpha}  \mathbb P_{\ell_{n \alpha}}( \mathcal D'_{k_1})
< \varepsilon^{14}.
$$
and $G_n''' = \cap_{k_1} G_{n,k_1}'''$.
By Fubini's theorem, $\mu (G_n'' \setminus G_n''')  < \varepsilon$. From now on, we assume
$(q_1, \varphi_1) \in G_n'''$.

Next, we define $\tilde{\tilde{\tilde{\mathfrak A}}}_n \subset \tilde{\tilde{\mathfrak A}}_n$ as the 
set of such $\alpha$'s for which
\[
\hat{\sum_{k_1}} \mathbb P_{\ell_{n \alpha}}( \mathcal D'_{k_1})
< \varepsilon^{11}.
\]
Again by Fubini's theorem,
\begin{equation}
\label{eq:tilde3}
\sum_{\alpha \in \tilde{\tilde{\mathfrak A}}_n \setminus \tilde{\tilde{\tilde{\mathfrak A}}}_n}
c_{n,\alpha} < \varepsilon.
\end{equation}
Now we can repeat the second half of the proof of Lemma \ref{lemma:Gn'} to conclude
that for $(q_1,\varphi_1) \in G'''_n$ and for $\alpha \in \tilde{\tilde{\tilde{\mathfrak A}}}_n$,
$$
\sum_{\beta \in \mathfrak B_{n,\alpha, k_1}, |\ell_{n, \alpha, k_1, \beta}| < \varepsilon^{-6}} 
c_{ n, \alpha, k_1, \beta} 
 < \varepsilon^{-4}.
$$
Now if $|\ell_{n, \alpha, k_1, \beta}| > \varepsilon^{-6}$, 
we use the same argument as in (\ref{eq:upperbdlast}) to conclude
$$
\mathbb P_{\ell_{n, \alpha, k_1, \beta}} 
( \mathcal C''_{k_2-k_1 - m/(2 \delta)}) < C \delta \varepsilon,
$$
where
$$
\mathcal C''_{k} = 
\{ (q_2, \varphi_2) : 
\Phi^{\lambda k \delta} (q_2, \varphi_2) \in \mathcal A_{\lambda, \delta, \varepsilon}^{\xi} 
( \Phi^{\tau_{n,k_1} + k \delta}(q_1,\varphi_1))\}.
$$
Hence for $\alpha \in \tilde{\tilde{\tilde{\mathfrak A}}}_n$,
(\ref{eq:offdiag}) is bounded by
$$
\hat{\sum_{k_1}} \mathbb P_{\ell_{n \alpha}} (\mathcal C'_{k_1})
\sum_{ k_2 =k_1 + m /\delta}^{ (M+m)/\delta}
\sum_{\beta \in \mathfrak B_{n,\alpha, k_1}} 
c_{n,\alpha, k_1, \beta}
\mathbb P_{\ell_{n, \alpha, k_1, \beta}} 
( \mathcal C''_{k_2-k_1 - m/(2 \delta)}) < C \delta'^2.
$$

We conclude
\begin{equation}
 \label{eq:lem:Markovdeclowerbd2}
\sum_{\alpha  \in \tilde{\tilde{\tilde{\mathfrak A}}}_n } c_{n,\alpha} 
 \hat{\sum_{k_1, k_2}}
\mathbb P_{\ell_{n,\alpha}}
( \mathcal C'_{k_1} \cap \mathcal C'_{k_2}) = \delta'o_{\xi}(1).
 \end{equation}

%%%%%%%%%%%%%%%%%%%%%%%%%%%%
{\bf Step 4: Finishing the proof }\\
By (\ref{eq:badpairs}) and (\ref{eq:tilde3}), we can 
replace 
(\ref{eq:lem:Markovdeclowerbd})
in Step 1 by (\ref{eq:lem:Markovdeclowerbd2}). Lemma \ref{lem:lowerbd} follows.
We have finished the proof of Proposition \ref{lem:Markovdec}
and Theorem \ref{thm1}.
\end{proof}

%%%%%%%%%%%%%%%%%%%%%%%%%%%%%%%%%%%%%%%%%%%%%%%%%%%%%%%%%%%%%%%%%%%%%%%%


\begin{thebibliography}{99}

\bibitem{BDL15}  Baladi V.,  Demers M., Liverani C.
{\it Exponential Decay of Correlations for Finite Horizon Sinai Billiard Flows,}
preprint, arXiv:1506.02836.

 \bibitem{B15} B\'alint, P., Gilbert, T., N\'andori, P., Sz\'asz, D., T\'oth, I.P.
 On the limiting Markov process of energy exchanges in a rarely interacting ball-piston gas,
preprint, arXiv:1510.06408.

\bibitem{B16} B\'alint, P., N\'andori, P., Sz\'asz, D., T\'oth, I.P.
Equidistribution for standard pairs in planar dispersing billiard flows, Work in progress

\bibitem{BLRB00}
Bonetto, F.; Lebowitz, J. L.; Rey-Bellet, L. 
{\it Fourier's law: a challenge to theorists,} Proceedings ICMP-2000, Imp. Coll. Press, London, pp. 128--150.

\bibitem{BSCh90}
Bunimovich, L. G., Sinai, Ya.G., Chernov, N. I.,
{\it Markov partitions for two dimensional hyperbolic billiards,} Russ. Math. Surv.  {\bf 45} 105-152 (1990).

\bibitem{CC13} 
Chazottes J.-R., Collet P.
  {\it Poisson approximation for the number of visits to balls
    in non-uniformly hyperbolic dynamical systems,} Ergodic Th. Dyn. Sys.
  {\bf 33} (2013) 49--80.

\bibitem{Ch99} Chernov, N., 
{\it Decay of correlations in dispersing billiards,}
Journal of Statistical Physics, {\bf 94} 513-556, (1999).   

\bibitem{Ch07} Chernov, N.,
{\it A stretched exponential bound on time correlations for billiard flows,}
Journal of Statistical Physics, {\bf 127}  21-50, (2007).

\bibitem{CM06} Chernov, N., Markarian, R.,
{\it Chaotic billiards,}
Math. Surveys and Monographs, {\bf 127} (2006) AMS, Providence, RI, 2006.

\bibitem{DST14} De Simoi, J., T\'oth, I. P.,
An expansion estimate for dispersing planar billiards with corner points,
{\it Annales Henri Poincar\'e}, {\bf 15} 1223-1243, (2014). 

\bibitem{D04}
  Dolgopyat D. {\it Limit theorems for partially hyperbolic systems,}
  Trans. AMS {\bf 356} (2004) 1637--1689.

\bibitem{G15} Gilbert, T., Unpublished notes (2015).

\bibitem{H14} Haydn N. T. A
{\it Entry and return times distribution,} 
Dyn. Syst. {\bf 28} (2013) 333--353. 

  
\bibitem{R14}  Rousseau J. {\it Hitting time statistics for
  observations of dynamical systems,} Nonlinearity {\bf 27} (2014)
  2377--2392.  
  
\bibitem{Sz00} Sz\'asz D. (ed)
{\it Hard ball systems and the Lorentz gas,}
Encyclopaedia of Math. Sci. {\bf 101} (2000) Math.Physics, Springer, Berlin viii+458 pp.  
  
\end{thebibliography}
\end{document}